\newtheorem{theorem}{\indent {Theorem}}
\newtheorem{lemma}{\indent {Lemma}}
\newtheorem{remark}{\indent {Remark}}
\newtheorem{corollary}{\indent {Corollary}}
\def\R{\mathbb{R}}
\def\1{{\bf 1}}
\def\a{\alpha}
\def\e{{\epsilon}}
\def\aor#1{{{#1}}}
\newenvironment{proof}[1][Proof]{\noindent\textbf{#1.} }{\ \rule{0.5em}{0.5em}}
\def\ao#1{{{#1}}}
\def\e{\epsilon}
\def\bx{{\mathbf x}}
\def\bw{{\mathbf w}}
\def\bz{{\mathbf z}}
\def\bg{{\mathbf g}}
\def\bv{{\mathbf v}}
\def\Argmin{\mathop{\rm Argmin}}
\begin{document}

\title{Distributed optimization over time-varying directed graphs}
\author{Angelia Nedi\'c and Alex Olshevsky\thanks{
The authors are with the Department of Industrial and 
Enterprise Systems Engineering, University of Illinois at Urbana-Champaign, 104 S. Mathews Avenue, Urbana IL, 61801, Emails: $\{$angelia,aolshev2$\}$@illinois.edu. A. Nedi\'c gratefully acknowledges the support by the NSF grants CMMI 07-42538 and CCF 11-11342, and by the ONR Navy Basic Research Challenge N00014-12-1-0998.}
}
\maketitle

\begin{abstract} We consider distributed optimization by a collection of nodes, each having access to its own convex function, whose collective goal is to minimize the sum of the functions. 
The communications between nodes are described
by a time-varying sequence of {\em directed} graphs, which is uniformly strongly connected. For such communications, assuming that every node knows its out-degree, 
we develop a broadcast-based algorithm, termed 
the {\em subgradient-push}, which steers every node to an optimal value under a standard assumption of subgradient boundedness.  The subgradient-push requires
no knowledge of either the number of agents or the graph sequence to implement. 
Our analysis shows that the subgradient-push algorithm converges at a rate 
of $O \left( \ln t/\sqrt{t} \right)$. The proportionality constant in the convergence rate 
depends on the initial values at the nodes, the subgradient norms and, more interestingly, 
on both the speed of the network information diffusion 
and the imbalances of influence among the nodes.  \end{abstract} 

\section{Introduction} 
We consider the problem of distributed convex optimization by a network of nodes when knowledge of the
objective function is scattered throughout the network and unavailable at any singe location. There has been much recent interest in multi-agent optimization problems of this type 
that arise whenever a large collections of nodes - which may be processors, nodes of a sensor network, vehicles, or UAVs - desire to collectively
optimize a global objective by means of local actions taken by each node without any centralized coordination. 

Specifically, we will study the problem of optimizing a sum of $n$ convex functions by a network of $n$ nodes when each function is known to only 
 a single node. This problem frequently arises when control and signal processing protocols need to be implemented in sensor networks. For example, 
the problems including robust statistical inference \cite{rabbat}, formation control \cite{othesis}, non-autonomous power control \cite{ram_info}, 
distributed message
routing \cite{neglia}, and spectrum access coordination \cite{li-han}, 
can be reduced to variations of this problem. 
We will be focusing on the case when communication between nodes is {\em directed} and {\em time-varying}. 
%Previous literature on this 
%problem has assumed that communication between nodes is fixed (meaning that either node $i$ can always send messages to node $j$ or node $i$ can never send messages node $j$) or
%undirected (meaning that if node $i$ can send messages to node $j$ then node $j$ can send messages to node $i$). 
%\subsection{Literature review} 

Distributed optimization of a sum of convex functions has received a surge  of interest in recent years \cite{AN2009, rabbat, johansson, LopesSayed2007, Lobel2011, LobelOF2011, SN2011, nedic-broadcast,ChenSayed2012, Ram2012, Duchi2012}. There is now a considerable theory justifying the
use of distributed subgradient methods in this setting, and their performance limitations and convergence times are well-understood. Moreover, distributed subgradient methods have been used to 
propose new solutions for a number of problems in distributed control and sensor networks \cite{ram_info,neglia,li-han}.
However, the works cited above assumed communications among nodes are either fixed or undirected.

Our paper is the first to demonstrate a working subgradient protocol in the
setting of directed time-varying communications. We develop a broadcast-based protocol, termed the {\em subgradient-push}, which steers every node to an optimal value under
a standard assumption of subgradient boundedness.  The subgradient-push requires each node to know its
out-degree at all times, but beyond this it needs
no knowledge of the graph sequence of even of the number of agents to implement. Our results show that it converges at a rate 
of $O \left( \ln t/\sqrt{t} \right)$, where the constant depends, among other factors, on the information diffusion speed of the corresponding directed graph sequence and 
a measure of the influence imbalance among the nodes.

Our work is closest to the recent papers \cite{rabbat_allerton2012, rabbat_cdc2012,Tsianos2011,Tsianos2013, Gh-Cortes}. \aor{The papers \cite{rabbat_allerton2012, rabbat_cdc2012,Tsianos2011,Tsianos2013} proposed a distributed subgradient algorithm which is very
similar to the one we study here, involving the introduction of subgradients into an information aggregation procedure known as ``push-sum.'' The 
convergence of this scheme for directed but fixed topologies was shown in  ~\cite{rabbat_allerton2012, rabbat_cdc2012,Tsianos2011,Tsianos2013};
 implementation of the protocol proposed in these papers} appears to require knowledge of the
graph or of the number of agents. By contrast, our results work in time-varying networks and are fully distributed,  requiring no knowledge of either the graph sequence or the number of agents.  The 
paper \cite{Gh-Cortes} shows the convergence of a distributed optimization protocol in continuous time, also for directed but fixed graphs; moreover, an additional assumption is
made in \cite{Gh-Cortes} that the graph is ``balanced.''  

All the prior work in distributed optimization, except for \cite{rabbat_allerton2012, rabbat_cdc2012,Tsianos2011,Tsianos2013}, requires time-varying communications with some form of balancedness, often reflected in a requirement of having a sequence of doubly stochastic matrices that are
commensurate with the sequence of underlying  communication graphs. In contrast, our proposed method
removes the need for the doubly stochastic matrices. 
The proposed distributed optimization model is motivated by applications that are characterized by time-varying directed communications,
such as those arising in a mobile sensor network where the links among 
nodes will come and go as nodes move in and out of line-of-sight or broadcast range of 
each other. Moreover, if different nodes are capable of broadcasting messages at different power levels, then communication links connecting the nodes
will necessarily be unidirectional. 
%THIS SHOULD BE INVESTIGATED: a synchronicity
%In addition, in contrast with the existing literature on (deterministic) consensus-based algorithms, the %proposed method for distributed optimization naturally lends itself to asynchronous implementations. 

%We next describe a formal setting  motivated by the above considerations.

%\subsection{Outline of the paper}
The remainder of this paper is organized as follows.
%dedicated to proving Theorems \ref{mainthm} and \ref{convrate}. 
We begin in Section~\ref{sec:results} 
where we describe the problem of interest, outline the subgradient-push algorithm, and \ao{state} the main convergence results. \ao{Section~\ref{sec:averaging} is devoted to the proof of a key lemma, namely the convergence rate result for a 
perturbed version of the so-called push-sum protocol; this lemma is then used in the subsequent} proofs of convergence and \ao{convergence rate} 
for the subgradient-push in Section \ref{sec:optim}. 
%Theorems \ref{mainthm} and \ref{convrate}, whose proofs are carried out in Section \ref{sec:optim}.
 Finally, some conclusions are offered in Section \ref{sec:concl}.

%\subsection{Notation} 
\noindent
{\bf Notation}: 
We use boldface to distinguish between the vectors in $\R^d$ and scalars \aor{or vectors of different dimensions.} 
For example, the vector $\bx_i(t)$ is in boldface while the scalar $y_i(t)$ is not.
The vectors such as $y(t)\in\R^n$ obtained by stacking scalar values $y_i(t)$ associated with the nodes 
\aor{are thus} not bolded. 
Additionally, for a vector $y$, we will also occasionally use $[y]_j$ to denote its $j$'th entry.
For a matrix $A$, we will use $A_{ij}$ or $[A]_{ij}$ to denote its $i,j$'th entry. \aor{The notation $A'$ will refer to the transpose of the matrix
$A$. }
The vectors are seen as column vectors unless otherwise explicitly stated.
We use $\1$ to denote the vector of ones,  
and $\| y\|$ for the Euclidean norm of a vector~$y$. 

\section{Problem, Algorithm and Main Results\label{sec:results}} 
We consider a network of $n$ nodes whose goal is to solve distributedly the following 
minimization problem: 
\[ \hbox{minimize } F(\bz) \triangleq \sum_{i=1}^n f_i( \bz) \quad\hbox{over $\bz\in\mathbb{R}^d$},\] 
where only node $i$ knows the convex function $f_i: \R^d \to \R$. Under the 
assumption that the set of optimal solutions $Z^* = \Argmin_{\bz \in \R^d} F(\bz)$ is nonempty, 
we would like to design a protocol by which all agents maintain variables 
$\bz_i(t)$ converging to the same point in $Z^*$ with time. 

We will assume that, at each time $t$, {\it node $i$ can only send messages to its out-neighbors in some
directed graph $G(t)$}. 
Naturally, the graph $G(t)$ will have vertex set $\{1, \ldots, n\}$, and we will use $E(t)$ to 
denote its edge set. 
%We assume that each $G(t)$ has a self-loop at every node since each node has access to its own information.
Also, naturally, the sequence $\{G(t)\}$ should posses some good long-term connectivity properties. 
A standard assumption, which
we will be making, is that the sequence $\{G(t)\}$ is uniformly strongly connected (or, as it is sometimes called, $B$-strongly-connected), namely,
that there exists some ineger $B>0$ (possibly unknown to the nodes) such that the graph with edge set 
\[E_B(k) =  \bigcup_{i=kB}^{(k+1)B-1} E(i) \] 
is strongly connected for every $k \geq 0$. This is a typical assumption for many results in multi-agent control: it is considerably
weaker than requiring each $G(t)$ be connected for it allows the edges necessary 
for connectivity to appear over a long time period and in
arbitrary order; however, it is still strong enough to derive bounds on the 
speed of information propagation from one part of the network to another.

Finally, we introduce the notation $N^{\rm in}_i(t)$ and $N^{\rm out}_i(t)$ for the in- and out-neighborhoods 
of node $i$, respectively,  at time $t$. We will \aor{require} these neighborhoods to include the node $i$ itself\footnote{Alternatively, one may define these neighborhoods in a standard way of the graph theory, but require that  each graph in the sequence $\{G(t)\}$ has a self-loop at every node.}; formally, we have
\begin{eqnarray*} N^{\rm in}_i(t) & = & \{j\mid (j,i)\in E(t)\} \cup\{i\}, \\ 
N^{\rm out}_i(t) & = & \{j\mid (i,j)\in E(t)\} \cup\{i\}, 
\end{eqnarray*} 
and $d_i(t)$ for the out-degree of node $i$, i.e., 
\[d_i(t) = |N^{\rm out}_i(t)|.\] 
Crucially,  we will be assuming that {\it every node $i$ 
knows its out-degree $d_i(t)$ at every time $t$}. 

Our main contribution is the design of an algorithm which successfully accomplishes the task of distributed 
minimization of $F(\bz)$  assumptions we have laid out above.  
Our scheme is a combination of the subgradient method and the so-called 
{\em push-sum} protocol, recently studied in the papers \cite{benezit, dominguez, dobra-kempe, ciblat}. 
We will refer to our protocol as the {\em subgradient-push} method. 
A subgradient method using the push-sum protocol has been considered 
in~\cite{Tsianos2011,rabbat_allerton2012,rabbat_cdc2012,Tsianos2013} 
for a fixed communication network, whose implementation 
requires some knowledge of the graph or the number of agents. In contrast, our algorithm can handle time-varying networks and its implementation requires node $i$ knowing its local out-degree $d_i(t)$ only.

%----------------------------------------------------
\subsection{The subgradient-push method} 
%----------------------------------------------------
Every node $i$ maintains
vector variables $\bx_i(t), \bw_i(t)\in\R^d$,  as well as a scalar variable 
$y_i(t)$. These
quantities are updated according to the following rules:
for all $t\ge0$ and all $i=1,\ldots,n$,
\begin{eqnarray}\label{eq:minmet} 
\bw_i(t+1) & = & \sum_{j \in N_i^{\rm in}(t)} \frac{\bx_j(t)}{d_j(t)},\cr
&&\hbox{}\cr
y_i(t+1) & = & \sum_{j \in N_i^{\rm in}(t)} \frac{y_j(t)}{d_j(t)}, \cr
&&\hbox{}\cr
\bz_{i}(t+1) & = & \frac{\bw_{i}(t+1)}{y_{i}(t+1)},\cr
&&\hbox{}\cr
\bx_i(t+1) &=& \bw_i(t+1) - \alpha(t+1) \bg_i(t+1),  
\end{eqnarray} where $\bg_i(t+1)$ is a subgradient of the function
$f_i(\bz)$ at $\bz=\bz_i(t+1)$. The method is initiated with an arbitrary vector
$\bx_i(0)\in\R^d$ at node $i$, and with $y_i(0)=1$ for all $i$. 
The stepsize $\alpha(t+1)>0$ satisfies the following decay conditions 
\begin{align}\label{eq:dimstep}
 \sum_{t=1}^{\infty} \alpha(t) =\infty,
\qquad \sum_{t=1}^{\infty} \alpha^2(t) < \infty, \cr 
\alpha(t) \le \alpha(s) \ \mbox{for all } t > s\ge1.
\end{align} 
We note that the above equations have simple broadcast-based implementation: each node $j$ broadcasts the 
quantities $\bx_j(t)/d_j(t), y_j(t)/d_j(t)$ to all of the nodes $i$ in its out-neighborhood\footnote{We note that we make use here of the 
assumption that node $i$ knows its out-degree $d_i(t)$.}, which simply sum all the messages 
they receive to obtain
$\bw_i(t+1)$ and $y_i(t+1)$. 
The update equations for $\bz_i(t+1), \bx_i(t+1)$ can be executed without any further communications 
\aor{among} nodes during step~$t$. 

Without the subgradient term in the final equation, our protocol would be a version of the push-sum protocol \cite{dobra-kempe} for average computation 
 studied recently in \cite{benezit, dominguez, ciblat}. For intuition on the precise form of these equations, we refer the reader to these three papers; roughly
 speaking, the somewhat involved form of the updates is intended to ensure that every node receives an equal weighting after all the linear combinations and
 ratios have been taken. In this case the vectors $\bz_{i}(t+1)$ converge to some common point, i.e.,
 a consensus is achieved.  
The inclusion of the subgradient terms in the updates of $\bx_i(t+1)$ is intended to steer the consensus point 
 towards the optimal set $Z^*$, while
the push-sum updates steer the vectors $\bz_{i}(t+1)$ towards each other. 
Our main results, which we describe in the next section, demonstrate that this scheme succeeds  in steering 
 all vectors $\bz_{i}(t+1)$ towards the same point in the solution set~$Z^*$.
 
 %-----------------------------
 \subsection{Our results}
 %-----------------------------
Our first theorem demonstrates the correctness of the subgradient-push method for an arbitrary stepsize $\alpha(t)$ satisfying Eq.~(\ref{eq:dimstep}); this holds under the assumptions we have laid out above, as well as an additional technical assumption on the subgradient boundedness. 
\begin{theorem} \label{mainthm} Suppose that:
\begin{enumerate} \item[(a)] 
The graph sequence $\{G(t)\}$ is uniformly strongly connected. 
% this is not needed: with a self-loop at every node - see how we defined in-neighbors and out-neighbors. 
\item[(b)] 
Each function $f_i(\bz)$ is convex \aor{over $\mathbb{R}^d$} 
and the set $Z^* = \Argmin_{\bz \in \R^d} F(\bz)$ is nonempty. 
\item[(c)] The subgradients of each $f_i(\bz)$ are uniformly bounded, i.e., there exists $L_i < \infty$ such that
for all $\bz\in\mathbb{R}^d$,
\[
\aor{\|\bg_i\|}\le L_i\ \hbox{for all subgradients $\bg_i$ of $f_i(\bz)$}.
\] 
\end{enumerate} Then, the distributed subgradient-push method of Eq.~\eqref{eq:minmet} with the stepsize satisfying the conditions in Eq.~\eqref{eq:dimstep} has the following property
\[ \lim_{t \rightarrow \infty} \bz_i(t) = \bz^* \qquad\mbox{ for all $i$ and for some $\bz^* \in Z^*$}. \]
\end{theorem}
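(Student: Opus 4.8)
The plan is to split the analysis into a \emph{consensus} estimate and an \emph{optimization} estimate, coupled through the running average of the $\bx_i(t)$. First I would put the linear part of \eqref{eq:minmet} into matrix form: let $A(t)$ be the $n\times n$ matrix with $[A(t)]_{ij}=1/d_j(t)$ whenever $j\in N_i^{\rm in}(t)$ and $[A(t)]_{ij}=0$ otherwise. Summing over out-neighbors shows each column of $A(t)$ adds to one, so $A(t)$ is column-stochastic and $\1^\top A(t)=\1^\top$. Two conservation laws follow: $\sum_i y_i(t)=n$ for all $t$ (since $y_i(0)=1$), and $\sum_i\bw_i(t+1)=\sum_i\bx_i(t)$. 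Writing $\bar{\bx}(t)=\frac1n\sum_{i}\bx_i(t)$ and summing the last line of \eqref{eq:minmet} then gives the clean averaged recursion
\[
\bar{\bx}(t+1)=\bar{\bx}(t)-\frac{\alpha(t+1)}{n}\sum_{i=1}^n\bg_i(t+1),
\]
which is a perturbed subgradient step on $F$; the ``perturbation'' is that each $\bg_i(t+1)$ is a subgradient of $f_i$ at $\bz_i(t+1)$ rather than at the common point $\bar{\bx}(t)$.

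Next I would invoke the perturbed push-sum lemma of Section~\ref{sec:averaging} to control the consensus error $\|\bz_i(t)-\bar{\bx}(t-1)\|$. The ratios $\bz_i=\bw_i/y_i$ are precisely those produced by push-sum driven by the perturbation sequence $\alpha(t)\bg_i(t)$, whose norms are bounded by $\alpha(t)L_i$ under assumption (c). That lemma should therefore yield a geometric-in-time bound modulated by the step sizes, of the shape $\|\bz_i(t)-\bar{\bx}(t-1)\|\le C\lambda^{t}+C\sum_{s\le t}\lambda^{t-s}\alpha(s)$ for some $\lambda\in(0,1)$ and a constant $C$ depending on the consensus speed and on a uniform positive lower bound for the $y_i(t)$. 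Since $\alpha(t)$ is non-increasing and square-summable, a standard convolution (Young-type) estimate, using $\alpha(t+1)\alpha(s)\le\frac12(\alpha^2(t+1)+\alpha^2(s))$ and $\sum_k\lambda^k<\infty$, then gives $\sum_t\alpha(t+1)\|\bz_i(t+1)-\bar{\bx}(t)\|<\infty$ for every $i$.

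The optimization estimate is a Lyapunov argument on $\|\bar{\bx}(t)-\bz^*\|^2$ for a fixed $\bz^*\in Z^*$. Expanding the averaged recursion, then using the subgradient inequality for each $f_i$ at $\bz_i(t+1)$, adding and subtracting $\bar{\bx}(t)$ inside both the linear term and the function values, and bounding the resulting errors by $L_i\|\bz_i(t+1)-\bar{\bx}(t)\|$, I would obtain
\[
\|\bar{\bx}(t+1)-\bz^*\|^2\le\|\bar{\bx}(t)-\bz^*\|^2-\frac{2\alpha(t+1)}{n}\bigl(F(\bar{\bx}(t))-F(\bz^*)\bigr)+\frac{4\alpha(t+1)}{n}\sum_i L_i\|\bz_i(t+1)-\bar{\bx}(t)\|+\frac{\alpha^2(t+1)}{n^2}\Bigl(\sum_i L_i\Bigr)^2.
\]
By the previous paragraph together with $\sum_t\alpha^2(t)<\infty$, the last two groups of terms are summable in $t$. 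A deterministic Robbins--Siegmund (supermartingale) argument then shows that $\|\bar{\bx}(t)-\bz^*\|$ converges for every $\bz^*\in Z^*$ and that $\sum_t\alpha(t+1)\bigl(F(\bar{\bx}(t))-F(\bz^*)\bigr)<\infty$; since $\sum_t\alpha(t)=\infty$, this forces $\liminf_t F(\bar{\bx}(t))=F(\bz^*)$. Combining the convergence of $\|\bar{\bx}(t)-\bz^*\|$ for \emph{all} optima with continuity of $F$ pins $\bar{\bx}(t)$ to a single point of $Z^*$, and the consensus bound transfers this limit to every $\bz_i(t)$, proving the theorem.

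The step I expect to be the main obstacle is the consensus estimate, precisely because the matrices $A(t)$ are only column-stochastic rather than doubly stochastic: the weights $y_i(t)$ can become badly unbalanced, and convergence of the ratios $\bw_i(t)/y_i(t)$ hinges on a uniform positive lower bound for $y_i(t)$, together with a geometric mixing rate, over a time-varying, merely $B$-strongly-connected directed sequence. Securing both is exactly what the perturbed push-sum lemma of Section~\ref{sec:averaging} is designed to supply, and the ``imbalance of influence'' constant appearing in the final rate is the direct footprint of that lower bound on $y_i(t)$.
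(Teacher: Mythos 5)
Your proposal is correct and follows essentially the same route as the paper: the column-stochasticity of $A(t)$ giving the averaged recursion $\bar\bx(t+1)=\bar\bx(t)-\frac{\alpha(t+1)}{n}\sum_i\bg_i(t+1)$, the perturbed push-sum lemma to get both $\|\bz_i(t+1)-\bar\bx(t)\|\to0$ and its $\alpha$-weighted summability, the descent inequality on $\|\bar\bx(t)-\bz^*\|^2$, a deterministic Robbins--Siegmund argument forcing $\bar\bx(t)$ to converge to a single point of $Z^*$, and the consensus bound transferring that limit to every $\bz_i(t)$. The only (immaterial) deviation is your AM-GM convolution estimate for the summability step, where the paper instead uses monotonicity of $\alpha(t)$ together with Lemma~\ref{lemma:betagama}(b).
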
 

Our second theorem makes explicit the rate at which the objective function converges to its optimal value. 
As standard with subgradient
methods, we will make two tweaks in order to get a convergence rate result: 
(i) we take a stepsize which decays as $\alpha(t) = 1/\sqrt{t}$ (stepsizes which decay at faster rates
usually produce inferior convergence rates), 
and (ii) each node $i$ will maintain a convex combination of the values $\bz_i(1), \bz_i(2), \ldots$ for which the convergence rate will be obtained. 
We then demonstrate that the subgradient-push converges at a rate of $O(\ln t/\sqrt{t})$; this is formally stated in the following theorem. The theorem makes use of the 
matrix $A(t)$ that captures the weights used in the construction of $\bw_i(t+1)$ and $y_i(t+1)$ in 
Eq.~\eqref{eq:minmet}, which are defined by
\begin{equation}\label{eq:defA}
A_{ij}(t) =\left\{
\begin{array}{ll}
1/d_j(t) & \hbox{whenever $j \in N_i^{\rm in}(t)$},\cr 
0 &\hbox{otherwise}.\end{array}\right.
\end{equation}  \aor{Moreover, we will use the notation $$L = \sum_{i=1}^n L_i.$$}

%THEOREM 2
\begin{theorem} \label{convrate} 
Suppose that all the assumptions of Theorem \ref{mainthm} hold, and let 
$\alpha(t) = 1/\sqrt{t}$ for $t\ge1$.  Moreover, suppose that every node
$i$ maintains the variable $\widetilde \bz_i(t) \in \R^d$ initialized at time $t=0$ with any
$\widetilde \bz_i(0)\in\R^d$ and updated by 
\[ \widetilde \bz_i(t+1) 
= \frac{\alpha(t+1) \bz_i(t+1) + S(t) \widetilde \bz_i(t)}{S(t+1)}
\quad\hbox{for $t\ge0$},  \] 
where $S(0)=0$ and $S(t) = \sum_{s=0}^{t -1} \alpha(s+1)$ for $t\ge1$.  
Then, we have for all $t \geq 1$, $i=1, \ldots, n$, and any $\bz^* \in Z^*$,
\begin{eqnarray*}  
&& F \left( \widetilde \bz_i(t+1) \right) - F(\bz^*) 
\leq  \frac{n}{2}\frac{\|\aor{\bar\bx(0)} - \bz^*\|_1}{\sqrt{t+1}} \cr
&&+ \frac{\aor{L^2} \left( 1 + \ln (t+1) \right)}{2 n\sqrt{t+1} } \cr
&&\hbox{}\cr
 & & + \frac{ 24 \aor{L} \sum_{j=1}^n \|\bx_j(0)\|_1 }
 {   \delta (1-\lambda) \sqrt{t+1} }\cr 
 &&\hbox{}\cr
 &&+ \frac{24 \aor{d} \aor{L^2} \left( 1+ \ln t \right) }{\delta (1-\lambda) \sqrt{t+1} }, 
 \end{eqnarray*}
 where 
 \[ \bar \bx(0) = \frac{1}{n} \sum_{i=1}^n \bx_i(0) .\] 
The scalars $\lambda$ and $\delta$ are functions of the graph sequence $G(1), G(2),\ldots,$ 
which are given by
%\begin{enumerate} \item[(a)] For any $B$-connected graph sequence  with a self-loop at every node, 
\begin{align*} \delta \geq  \frac{1}{n^{nB}}, \qquad
\lambda \leq \left( 1 - \frac{1}{n^{nB}} \right)^{1/(nB)}.
\end{align*}
%\item[(b)] 
If each of the graphs $G(t)$, $t\ge1$, is regular\footnote{A directed graph $G(t)$ is regular if  
every out-degree and every in-degree of a node in $G(t)$ equals $d(t)$ for some $d(t)$.}, then 
\begin{align*} 
\delta  =  1, \  
\lambda \leq 
\min\left\{\left( 1 - \frac{1}{4n^3} \right)^{1/B} , \max_{t\ge0} \aor{\sigma_2(A(t))} \right\},
\end{align*} where $A(t)$ is defined by Eq.~\eqref{eq:defA}
and $\sigma_2(A)$ is the second-largest singular value of a matrix $A$.
%\end{enumerate} 
\end{theorem}

Theorem~\ref{convrate} implies that, along the time-averages $\widetilde \bz_i(t)$ for each node $i$,
the network objective function $F(z)$ converges to the optimal objective value $F^*$,
i.e.,
\[\lim_{t\to\infty}F \left( \widetilde \bz_i(t) \right) = F^*\qquad\hbox{for all }i.\]
However, the theorem
does not state anything about the convergence of the sequences $\{\widetilde \bz_i(t)\}$ for $i=1,\ldots,n$.

Theorem~\ref{convrate} provides the rate at which $F \left( \widetilde \bz_i(t)\right)$ converges to 
$F^*$ for any $i$, which is expected. Specifically, 
it is standard for a distributed subgradient method 
to converge at the rate of $O( \ln t/\sqrt{t})$ with the 
constant depending on the subgradient-norm upper bounds $L_i$ and the initial conditions 
$\bx_i(0)$ \cite{Ram2010,Duchi2012}. Moreover, it is also standard for the 
convergence rate of the distributed methods over networks 
to depend on some measure of the connectivity of 
the directed graph sequence $G(1), G(2), \ldots$. Namely, here, the closeness of 
$\lambda$ to $1$ measures the speed at which the (connectivity) graph sequence 
$\{G(t)\}$ diffuses the information among the nodes over time. 
\aor{Additionally}, our rate results also include the parameter $\delta$, 
which is a measure of the imbalance of  influences among the nodes, as we will later see. 
Time-varying directed regular networks 
are uniform in influence and will have $\delta=1$, so that $\delta$ will disappear from the bounds entirely; however, networks which have a row very 
close to zero, \aor{corresponding nodes which are only weakly influenced by all others,  }
will suffer a corresponding blow-up in the convergence time of the subgradient-push algorithm. \aor{Consequently, $\delta$ may be thought of as
a measure of the uniformity of long-term influence among the nodes.}

Moreover, while the term $1/(\delta(1-\lambda))$ appearing in our rate estimate is bounded 
exponentially by $n^{2nB}$ in the worst case, the term need not be this large
for every graph sequence. Indeed,  Theorem~\ref{convrate} 
shows that for a class of time-varying regular directed graphs, $1/(\delta(1-\lambda))$ scales polynomially in $n$.  Our work therefore motivates the search for effective bounds on consensus speed
and the influence imbalances for time-varying directed graphs.  
Finally, we note that previous research \cite{AN2009, Ram2010, Duchi2012} has studied the case when
the matrices $A(t)$ in~\eqref{eq:defA} %(defined in the statement of Theorem \ref{convrate}) 
are doubly stochastic, which occurs when the directed graph sequence $\{G(t)\}$ is regular. 
In this case, our polynomial bounds match the previously known results.

\aor{We conclude by briefly summarizing the idea of the proofs as well as the organization of the 
remainder of this paper. As previously remarked, our protocol is a perturbation of the so-called ``push-sum'' protocol for averaging 
studied in \cite{benezit, dominguez, dobra-kempe}. 
We begin in Section \ref{sec:averaging} by showing that as a 
consequence of well-known facts about consensus protocols, such perturbed push-sum protocols are guaranteed to converge when the perturbations are well behaved (in a sense). 
In the subsequent Section \ref{sec:optim}, we interpret the subgradient-push algorithm as a special
perturbation of the push-sum protocol and use the convergence results of Section \ref{sec:averaging}
to show that, after a transient 
period, our subgradient-push algorithm begins to approximate a centralized subgradient scheme. 
Finally some simulations are performed in
Section \ref{sec:simul} and conclusions are drawn in Section \ref{sec:concl}.  }

%%%%%%%%%%%%%%%%%%%%%%%%%%%%%%%%%
\section{Perturbed Push-Sum Protocol}\label{sec:averaging}
%%%%%%%%%%%%%%%%%%%%%%%%%%%%%%%%%
This section is dedicated to the analysis of a perturbed version of the so-called push-sum protocol, originally introduced in the groundbreaking work~\cite{dobra-kempe} and recently
analyzed in time-varying directed graphs in~\cite{benezit, dominguez}. 
The push-sum is a protocol for node interaction which allows nodes
to compute averages and other aggregates in the network with directed communication links.

The work in~\cite{dobra-kempe, benezit, dominguez} 
demonstrates the convergence of the push-sum protocol. 
Here, we generalize this result 
by showing that the protocol remains convergent even if the state of the nodes is perturbed at each step, as long as the perturbations decay to zero. However, while the iterate sequences (at the nodes) obtained 
by the push-sum method converge to the average of the initial values of the nodes, the iterate sequences produced by the perturbed push-sum method need not converge to a common point; instead, they converge to each other over time.
We will later use this result to prove Theorems \ref{mainthm} and \ref{convrate}. 
Since the result has a self-contained interpretation and 
analysis, we sequester it to this section. 

We begin with a statement of the perturbed push-sum update rule. Every node $i$ maintains scalar 
variables $x_i(t), y_i(t), z_i(t), w_i(t)$, where $y_i(0)=1$ for all $i$. 
These variables are updated as  follows: for $t\ge0$,
\begin{eqnarray}\label{eq:newx}
 w_i(t+1)  &=& \sum_{j \in N^{\rm in}_i(t) } \frac{x_j(t)}{d_j(t)},\cr
 \hbox{}\cr
y_i(t+1) & = & \sum_{j \in N^{\rm in} _i(t)} \frac{y_j(t)}{d_j(t)}, \cr
\hbox{}\cr
z_i(t+1) & = & \frac{w_i(t+1)}{y_i(t+1)},  \cr
\hbox{}\cr
x_i(t+1)&=&w_i(t+1) + \e_i(t+1),
\end{eqnarray} 
where $\e_i(t)$ is a perturbation at time $t$, 
perhaps adversarially chosen. \aor{Recall} that $N_i^{\rm in}(t)$ is the in-neighborhood 
of node $i$ in a directed graph $G(t)$ and $d_j(t)$ is the out-degree of node $j$, as  defined
in Section~\ref{sec:results}.

Without the perturbation term $\e_i(t)$, the method in Eq.~(\ref{eq:newx}) reduces to the push-sum protocol. Moreover, our subgradient-push method of Eq.~(\ref{eq:minmet}) is simply a vector-space analog of
the perturbed pus-sum method of Eq.~(\ref{eq:newx}) with a specific choice of the perturbations. 
%$\e_i(t)$ along each coordinate of the vectors $\bx_i(t+1)$. 

The intuition behind the push-sum equations of Eq.~(\ref{eq:newx}) is somewhat involved. This dynamic has been  introduced for the purpose of average computation (in the 
case when all the perturbations $\e_i(t)$ are zero) and has a simple motivating intuition. The push-sum is a variation of a consensus-like protocol wherein every node updates 
its values by taking linear combinations of the values of its neighbors. Due to taking linear combinations, 
some nodes are bound to be more influential than others (meaning that other nodes end 
up placing larger weights on their information), for example by virtue of being
more centrally placed. To cancel out the effect of these influence imbalances of the nodes,
the ratios $z_i(t)=w_i(t)/y_i(t)$  are used, which ensures
that each $z_i(t)$ converges to $(1/n) \sum_{i=1}^n x_i(0).$ We refer the
reader to \cite{dobra-kempe, dominguez, benezit} for more details. 

Next, we rewrite the perturbed push-sum equations more compactly by 
using the definition of the matrix $A(t)$ in Eq.~(\ref{eq:defA}).  
Then, the relations in Eq.~(\ref{eq:newx}) assume the following form:
for all $t\ge0$,
\begin{eqnarray}\label{eq:pert}
 w(t+1) &= &A(t)x(t),\cr
\hbox{}\cr
y(t+1) &=& A(t)y(t), \cr
\hbox{}\cr
z_i(t+1) &=& \frac{w_i(t+1)}{y_i(t+1)}\quad\hbox{for all }i=1,\ldots,n,  \cr
\hbox{}\cr
x(t+1) &=& w(t+1)+\e(t+1),
\end{eqnarray} 
where $\e(t)=(\e_1(t),\ldots,\e_n(t))'$. 
\aor{Each of matrices} $A(t)$ is column-stochastic but not necessarily row-stochastic. 

We are concerned with demonstrating a convergence result and a convergence rate for the 
updates given in Eq.~(\ref{eq:newx}), or equivalently Eq.~\eqref{eq:pert}. 
Specifically,  the bulk of this section is dedicated to 
proving the following lemma.

\begin{lemma}\label{lemma:newx} 
Consider the sequences $\{z_i(t)\}$, $i=1,\ldots,n,$ generated by the method in 
Eq.~\eqref{eq:newx}. Assuming that the graph sequence $\{G(t)\}$ is uniformly
strongly connected, % with a self-loop at every node, 
the following statements hold: 
\begin{itemize}
\item[(a)] 
%There exist some $\delta > 0$ and $\lambda \in (0,1)$ such that 
For all $t\ge 1$ we have
\begin{eqnarray*}
\left|z_i(t+1) -\frac{\1' x(t)}{n}\right|
& \le &
\frac{8}{\delta}\,\left(\lambda^t\|x(0)\|_1 \right.
\cr
&& \left.
+ \sum_{s=1}^{t}\lambda^{t-s}\|\e(s)\|_1 \right),
\end{eqnarray*} 
%Moreover, we may choose
where $\delta>0$ and $\lambda \in (0,1)$ satisfy
\[ \delta \geq \frac{1}{n^{nB}}, ~~~ \lambda \leq \left( 1-\frac{1}{n^{nB}} \right)^{1/B}. \] 
If each of the matrices $A(t)$ in Eq.~\eqref{eq:defA} are doubly stochastic, then
\[ \delta = 1, \ 
\lambda \leq \left\{\left( 1 - \frac{1}{\ao{4} n^3} \right)^{1/B}, \max_{t\ge0} \aor{\sigma_2(A(t))}\right\}.\] 
%and\[ \lambda \leq \max_t {\sigma_2(A(t))}. \] 
\item[(b)]
If $\lim_{t\to\infty}\e_i(t)=0$ for all $i=1,\ldots,n$, then 
\[\lim_{t\to\infty}\left|z_i(t+1) -\frac{\1' x(t)}{n}\right|=0
\quad\hbox{for all }i.\]
\item[(c$)$]
If $\{\a(t)\}$ is a non-increasing positive scalar sequence with
$\sum_{t=1}^\infty \a(t)|\e_i(t)| <\infty$ for all $i$, then 
\[ \sum_{t=0}^\infty\a(t+1) \left|z_i(t+1) - \frac{\1'x(t)}{n}\right|<\infty
 \ \hbox{for all } i.\]
\end{itemize}
\end{lemma}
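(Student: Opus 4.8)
The plan is to reduce everything to a single ergodic-contraction estimate on the backward products of the weight matrices, and then obtain parts (b) and (c) as soft consequences of the bound in (a). Write $A(t\!:\!s) = A(t)A(t-1)\cdots A(s)$ for $t \ge s$. Unrolling Eq.~\eqref{eq:pert} and using $y(0)=\1$ gives $w(t+1) = A(t\!:\!0)x(0) + \sum_{s=1}^t A(t\!:\!s)\e(s)$ and $y(t+1) = A(t\!:\!0)\1$; moreover, column-stochasticity ($\1'A(\tau)=\1'$) yields the exact mean recursion $\1'x(t) = \1'x(0) + \sum_{s=1}^t \1'\e(s)$. The quantity to control is $z_i(t+1)-\1'x(t)/n$, and the useful reformulation is $y_i(t+1)\big(z_i(t+1) - \1'x(t)/n\big) = w_i(t+1) - (y_i(t+1)/n)\,\1'x(t)$, since $y_i(t+1)$ is the natural normalizer appearing in $z_i$.

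The heart of the argument — and the step I expect to be the main obstacle — is the claim that the columns of the backward products become uniformly close at a geometric rate: there exist $\lambda\in(0,1)$ and a constant $C$ with $\big|[A(t\!:\!s)]_{ij} - [A(t\!:\!s)]_{ik}\big| \le C\lambda^{t-s}$ for all $i,j,k$ and $t\ge s\ge 0$. I would prove this by noting that each $A(\tau)$ is $\ell_1$-nonexpansive on the subspace of zero-sum vectors (a direct consequence of column-stochasticity), and that under $B$-connectivity with self-loops every product over a window of length $nB$ is entrywise positive with entries at least $n^{-nB}$; a positive column-stochastic matrix strictly contracts zero-sum vectors in $\ell_1$, so composing over $\lfloor (t-s)/(nB)\rfloor$ disjoint windows gives geometric decay with $\lambda$ of the claimed form. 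Writing $[A(t\!:\!s)]_{ij}-[A(t\!:\!s)]_{ik}=[A(t\!:\!s)(e_j-e_k)]_i$ makes the zero-sum input explicit. The same window-positivity argument shows each entry of $y(t+1)=A(t\!:\!0)\1$ is bounded below by $\delta \ge n^{-nB}$, which is exactly the normalizer lower bound needed below.

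With the contraction in hand, part (a) is assembly. In $w_i(t+1) - (y_i(t+1)/n)\,\1'x(t)$ I would substitute the unrolled expressions and group terms: the coefficient of $x_j(0)$ becomes $[A(t\!:\!0)]_{ij} - \tfrac1n\sum_k[A(t\!:\!0)]_{ik}$, a deviation of an entry from its row average, bounded by $C\lambda^t$ via the column-closeness estimate; the coefficient of $\e_j(s)$ becomes $[A(t\!:\!s)]_{ij} - \tfrac1n\sum_k[A(t\!:\!0)]_{ik}$, which — using $A(t\!:\!0)=A(t\!:\!s)A(s-1\!:\!0)$ to write the row average of $A(t\!:\!0)$ as a convex combination of the entries $[A(t\!:\!s)]_{im}$ — is again a difference of columns of $A(t\!:\!s)$ and is bounded by $C\lambda^{t-s}$. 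Summing against $\|x(0)\|_1$ and $\|\e(s)\|_1$ and dividing by $y_i(t+1)\ge\delta$ yields the stated bound, with the constants arranged so that the prefactor is $8/\delta$. The doubly stochastic refinement is then immediate: $y(t+1)=A(t\!:\!0)\1=\1$, so $\delta=1$ exactly, and $\lambda$ is bounded through the second singular value by the standard spectral estimate for the symmetric part of the contraction.

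Finally, parts (b) and (c) follow from (a) by elementary summation. For (b), $\lambda^t\|x(0)\|_1\to0$ geometrically, and $\sum_{s=1}^t\lambda^{t-s}\|\e(s)\|_1\to0$ by the standard fact that the geometric convolution of a null sequence is null (split the sum at $s=t/2$). For (c), multiply the bound in (a) by $\alpha(t+1)$ and sum over $t$: the $x(0)$-term contributes $\sum_t\alpha(t+1)\lambda^t<\infty$ since $\alpha$ is bounded, and for the perturbation term I would exchange the order of summation to get $\sum_s\|\e(s)\|_1\sum_{t\ge s}\alpha(t+1)\lambda^{t-s}$, then use that $\alpha$ is non-increasing to bound $\alpha(t+1)\le\alpha(s)$, leaving $\tfrac{1}{1-\lambda}\sum_s\alpha(s)\|\e(s)\|_1<\infty$ by hypothesis. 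This completes the plan.
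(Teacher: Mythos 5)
Your proposal is correct, and although it shares the paper's overall architecture (unroll the recursion, invoke geometric ergodicity of the matrix products, lower-bound the normalizer $y_i(t+1)$ by $\delta$, then obtain (b) and (c) by soft convolution arguments), the execution of the key estimate is genuinely different. The paper does not prove ergodicity directly: it imports Lemma~\ref{lemma:crate} for the \emph{transposed} products $A'(t)\cdots A'(s)$ from the literature and then transposes (Corollary~\ref{rowstochasticcorr}), which forces it to confront the fact that reversing a $B$-connected graph sequence, together with the direction of every edge, need not leave an initial segment of a $B$-connected sequence; this costs a doubling of the constant $C$ and a paragraph of justification. Your forward-product formulation --- $\ell_1$-nonexpansiveness of column-stochastic matrices on zero-sum vectors, plus strict contraction by a factor $1-n\cdot n^{-nB}$ across each entrywise-positive window of length $nB$ --- avoids the transposition issue entirely and is self-contained where the paper cites prior work. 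Your algebraic assembly also differs: the paper subtracts the limit vectors $\phi(t)$, writing $D(t,s)=A(t\!:\!s)-\phi(t)\1'$, and must separately bound $|\1'x(t)|$ to absorb the cross term $\1'x(t)[D(t\!:\!0)\1]_i$, whereas you compare entries of $A(t\!:\!s)$ to the row average of $A(t\!:\!0)$ and dispose of the mismatch via the identity $\tfrac1n\sum_k [A(t\!:\!0)]_{ik}=\sum_m [A(t\!:\!s)]_{im}\mu_m$, where $\mu_m$ is $1/n$ times the $m$'th row sum of $A(s-1\!:\!0)$ (a probability vector by column-stochasticity); this never requires $\phi(t)$ to exist and yields slightly better constants. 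Two points to tighten. First, for $t+1<nB$ the window-positivity bound on $y_i(t+1)$ is not yet available; bound the row sum below by the diagonal entry of the partial product, which is at least $n^{-(t+1)}\ge n^{-nB}$ thanks to the self-loops (this is exactly how Lemma~\ref{lemma:delta} handles short times). Second, in the doubly stochastic refinement your spectral argument does give $\lambda\le\max_{t\ge1}\sqrt{\sigma_2(A(t))}$, since $\1$ is a top singular vector of a doubly stochastic matrix and the zero-sum subspace is invariant under $A'(t)A(t)$; but the polynomial bound $\left(1-1/(4n^3)\right)^{1/B}$ cannot come out of your generic window-positivity contraction, which only yields $n^{-nB}$-type factors --- it requires the quadratic-decay estimate for doubly stochastic chains that the paper itself takes from an external reference, so that piece must still be cited rather than derived.
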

For part (b) of Lemma~\ref{lemma:newx}, observe that each of the matrices $A(t)$ is doubly stochastic if 
each of the graphs $G(t)$ is regular.
Furthermore, observe that if $\epsilon_i(t)=0$, this lemma implies that the push-sum method 
converges at a geometric rate. In this case, it is easy to see that $\1' x(t)/n = \1'x(0)/n$ 
and, therefore  $z_i(t) \rightarrow \1' x(0)/n$, so that the push-sum protocol successfully computes 
the average of the initial values. When the perturbations are nonzero, Lemma~\ref{lemma:newx}
states that if these perturbations decay to zero, then the push-sum 
method still converges. Of course, it will no longer be true 
that the convergence is necessarily to the average of the initial values. 

We will prove a series of auxiliary lemmas before beginning the proof of Lemma \ref{lemma:newx}.  
We first remark that the matrices $A(t)$ have a special structure that allows us to efficiently analyze their products. Specifically, we have the following properties of the matrices $A(t)$ 
(see \cite{bhot, Morse, Moreau, Nedic-Ozdaglar-Delays, TBA} for proofs of this and similar statements). 
 
\begin{lemma}\label{lemma:crate} 
Suppose that the graph sequence $\{G(t)\}$ is uniformly strongly connected. % with a self-loop at every node. 
\aor{Then for each integer $s \geq 0$, there is a stochastic vector $\phi(s)$ such that for all $i,j$ and $t \geq s,$ 
\[|[A'(t)A'(t-1)\cdots A'(s+1)A'(s)]_{ij}-\phi_j(s)|\le C\lambda^{t-s},\]}
for some $C$ and $\lambda \in (0,1)$. 
%\end{itemize}
\end{lemma} 

\smallskip

%\aor{\begin{remark} The previous lemma is usuall by stating that the infinite left-product of 
%stochastic matrices $\prod_{j \geq s} A'(s)$  converges to the rank-$1$ stochastic matrix $\1 \phi'(s)$ and the
%convergence rate is geometric. \end{remark}}

There are known bounds on the parameters $C, \lambda$ in 
Lemma~\ref{lemma:crate}, which characterize how large $C$ is and how far away
$\lambda$ is from $1$. Moreover, these bounds improve if the sequence $\{G(t)\}$ 
has some nice properties. The following lemma is a formal statement to this effect. 

\begin{lemma} \label{clambdachoices}
Let the graph sequence $\{G(t)\}$ be uniformly strongly connected.
Then, in the statement of Lemma~\ref{lemma:crate}(b) we have
\[ C = 2, \qquad \lambda = \left( 1 - \frac{1}{n^{nB}} \right)^{1/B}.\] 
If in addition every graph $G(t)$ is regular, we have
\[ C = \sqrt{2}, \   
\lambda = \min\left\{\left( 1 - \frac{1}{4n^3} \right)^{1/B}, \max_{t\ge0} \aor{\sigma_2(A(t))}\right\}. \] 
\end{lemma}

\aor{\begin{remark} We remark that this lemma is not novel relative to the previous literature, but rather is an explicit statement of the
bounds that have been implicitly used in the previous papers on the subject.  \end{remark}}

\begin{proof} 
From \cite{bhot, Morse, TBA}, under the assumption of the uniform strong connectivity of the graphs
and our definition of neighborhoods,  we have that:
if \[ x(t) = A'(t-1) \cdots A'(s) x(s) \quad \hbox{for all }t>s\ge0, \] 
then 
\begin{align*}
\max_{1\le i\le  n} x_i(t) - & \min_{1\le i\le n} x_i(t) 
\leq \left( 1 - \frac{1}{n^{nB}} \right)^{\lfloor (t-s)/(nB) \rfloor} \cr
& \times \left( \max_{1\le i\le n} x_i(s) - \min_{1\le i\le n} x_i(s) \right). \end{align*}
The preceding relation implies that for all $t>s\ge 0$,
\begin{align*}
\max_{1\le i \le n} x_i(t) - & \min_{1\le i\le n} x_i(t) 
\leq 2 \left( \left( 1 - \frac{1}{n^{nB}} \right)^{1/(nB)} \right)^{t-s} \cr
&\times \left( \max_{1\le i \le n} x_i(s) - \min_{1\le i\le n} x_i(s) \right). \end{align*}
This holds for every $x(s)$. 
By choosing $x(s)$ to be each of the $n$ basis vectors, we see that for every $j=1, \ldots, n$, 
\begin{align*}
&\max_{i} [A'(t)\cdots A'(s)]_{ij} - \min_i [A'(t) \cdots A'(s)]_{ij} \cr
&\leq 2 \left( \left( 1 - \frac{1}{n^{nB}} \right)^{1/(nB)} \right)^{t-s}. \end{align*}
Since each matrix $A'(t)$ is row-stochastic, the 
entry $\phi_j(s)$ is a limit of the convex combinations of the $n$ numbers $[A'(t) \cdots A'(s)]_{ij}, i = 1, \ldots, n$, as $t\to\infty$. % the preceding statement may go in Lemma 2
Hence, we have proven the first relation of the lemma
for $t>s$. For $t=s$, since the matrix $A(s)$ is column-stochastic and $\phi(s)$ is a stochastic vector,
we obviously have $|[A'(s)]_{ij}-\phi_j(s)|\le 2$ for all $i,j$, 
showing that the relation also holds for $t=s$.

As for the second statement, when the graphs $G(t)$ are regular, 
each of the matrices $A(t)$ is doubly stochastic. Then, 
the results of $\cite{noot09}$ imply that:
if \[ x(t) = A'(t-1) \cdots A'(s) x(s)\quad\hbox{for all }t>s\ge0, \] 
then for all $t>s\ge 0$ we have
\[ \| x(t) - \bar{x}_s\1 \|^2 
\leq \left( 1 - \frac{1}{2n^3} \right)^{\lfloor (t-s)/B \rfloor} 
\|x(s) - \bar{x}_s\1 \|^2, \]
%\[ \sum_{i=1}^n \left( x_i(t) - \bar{x} \right)^2 
%\leq \left( 1 - \frac{1}{2n^3} \right)^{\lfloor (t-s)/B \rfloor} \sum_{i=1}^n \left( x_i(s) - \bar{x} \right)^2, \]
where $\bar{x}_s$ is the average of the entries of $x(s)$. 
From the preceding inequality, we can see that for all $t>s\ge 0$,
\[ \|x(t) - \bar{x}_s\1 \|^2 \leq 2 \left( 1 - \frac{1}{2n^3} \right)^{(t-s)/B}
   \| x(s) - \bar{x}_s\1 \|^2, \]
implying that 
\[\max_{i} |x_i(s)-\bar x_s| \le \sqrt{2  \left(1 - \frac{1}{2n^3}\right)^{(t-s)/B} }
\| x(s) - \bar{x}_s\1 \|.\]  
%\[ \sum_{i=1}^n \left( x_i(t) - \bar{x} \right)^2 \leq 
%2 \left( \left( 1 - \frac{1}{2n^3} \right)^{1/B} \right)^{t-s} \sum_{i=1}^n \left( x_i(s) - \bar{x} \right)^2. \]
Moreover, since the relation above holds for any vector $x(s)\in\R^n$, by 
plugging in each basis vector $e_j\in\R^n$ and noting that $\|e_j-(1/n)\1\|\le 1$, 
we obtain for each $j$ and all $t>s\ge0$,
\[ \max_{1\le i\le n} [A'(t) \cdots A'(s)]_{ij} - \frac{1}{n} 
\leq \sqrt{2} \left(\sqrt{  1 - \frac{1}{2n^3}}\right)^{(t-s)/B}.  \] 
Since $\sqrt{1 - \beta/2} \leq 1-\beta/4$ for all $\beta \in (0,1)$, it follows that we may choose 
$C=\sqrt{2}$  and $\lambda = (1 - 1/(4n^3))^{1/B}$.
The same line of argument can be used to show that we may choose 
$C=1$ and $\lambda = \max_{t\ge0} \aor{\sigma_2(A(t))}$.
\end{proof}

\aor{We will end up applying this lemma to a sequence of graphs which is only uniformly strongly connected after throwing out 
a few graphs at the start. To that end, we have the following corollary whose proof is straightforward.} 

\aor{\begin{corollary} \label{almoststrongly} 
Suppose that the graph sequence $G(t)$ has the following property: 
there exists an integer $T>0$ such that  given any integer $s\ge0$, 
there is a time $0\le t_s \leq T$  for which the
%such that $G(s)$ is uniformly strongly connected starting at time $s+t_s$, i.e., the 
graph sequence $G(s+t_s), G(s+t_s+1), G(s+t_s+2), \ldots$ is uniformly strongly connected. 
Then, for each integer $s \geq 0$ there is a stochastic vector
$\phi(s)$ such that for all $i,j$ and $t \geq s$, 
\[ \left| [A'(t) \cdots A'(s)]_{ij} - \phi_j(s) \right| \leq C \lambda^{t- s - T}, \] 
where $C, \lambda$ may be taken as in Lemma \ref{clambdachoices}. 
\end{corollary}}

In the proof of Lemma~\ref{lemma:newx}, we will make use of 
a lower bound on the entries of the vectors $\1'A(t) \cdots A(0)$. 
To provide such a bound, we employ the following  intermediate result
which provides a uniform lower bound on the entries of the vectors $\1'A'(t) \cdots A'(0)$. 

\begin{lemma} \label{lemma:delta}
Given a graph sequence $\{G(t)\}$, define 
\begin{align*}
 \delta' \triangleq \inf_{t=0,1, \ldots} \left(\min_{1\le i \le n}  [\1' A'(t) \cdots A'(0)]_i \right). \end{align*}
If the graph sequence $\{G(t)\}$ is uniformly strongly connected, then  $\delta' \geq \frac{1}{n^{nB}}.$
If each $G(t)$ is regular, then $\delta' = 1$.
\end{lemma}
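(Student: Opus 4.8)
The plan is to reduce everything to two structural facts about the matrices $A(t)$ from Eq.~\eqref{eq:defA}: each $A(t)$ is column-stochastic (so $\1'A(t)=\1'$), and each of its positive entries is at least $1/n$, since $A_{ij}(t)=1/d_j(t)$ and $d_j(t)\le n$, with the diagonal entries always positive thanks to the self-loops. I would begin by rewriting the quantity of interest: taking transposes, $[\1'A'(t)\cdots A'(1)]_i$ is exactly the $i$-th entry of the vector $\pi(t):=A(1)A(2)\cdots A(t)\1$, i.e.\ the $i$-th row sum of the product $A(1)\cdots A(t)$. Column-stochasticity then gives the conservation law $\1'\pi(t)=\1'A(1)\cdots A(t)\1=\1'\1=n$ for every $t$, because $\1'$ is a left eigenvector of each factor. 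Thus the total mass of $\pi(t)$ is always $n$, and the task is to show that no coordinate of $\pi(t)$ can fall below $n^{-nB}$.

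The key external fact I would invoke is the standard consequence of $B$-strong connectivity (the same fact underlying Lemma~\ref{clambdachoices}(a) and the references cited for Lemma~\ref{lemma:crate}): the product of any $nB$ consecutive matrices $A(s)$ has all entries at least $n^{-nB}$. Intuitively, within each length-$B$ window one may realize any single edge of a connecting path, using the self-loops to ``wait'' before and after that edge appears; a connecting path in a strongly connected graph has at most $n-1$ edges, so at most $n-1<n$ windows are needed, and each of the $nB$ factors along the resulting path contributes at least $1/n$.

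With these in hand the bound $\delta\ge n^{-nB}$ follows from a decomposition that places the fully positive block on the \emph{left}. For $t\ge nB$ I would write $\pi(t)=R\,v$, where $R:=A(1)\cdots A(nB)$ has every entry $\ge n^{-nB}$ and $v:=A(nB+1)\cdots A(t)\,\1$ has nonnegative entries with total mass $\1'v=n$ (again by column-stochasticity of the factors making up $v$). Then for each $i$,
\[
\pi_i(t)=\sum_{m=1}^n R_{im}v_m \ \ge\ \Big(\min_m R_{im}\Big)\sum_{m=1}^n v_m \ \ge\ n^{-nB}\cdot n \ \ge\ n^{-nB}.
\]
For the remaining case $t<nB$ I would simply keep the mass on the diagonal via the self-loops: $\pi_i(t)\ge [A(1)\cdots A(t)]_{ii}\ge\prod_{s=1}^t A_{ii}(s)\ge n^{-t}\ge n^{-nB}$. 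Taking the infimum over $t$ and the minimum over $i$ yields $\delta\ge n^{-nB}$. I expect the one genuine subtlety to be the \emph{orientation} of this decomposition: the analogous grouping with the positive block on the right fails, because the row sums of a column-stochastic prefix need not be bounded below; it is essential to put the positive block on the left and exploit the conserved mass $\1'v=n$ of the inner vector.

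Finally, for the regular case I would observe that regularity forces each $A(t)$ to be doubly stochastic: its $i$-th row sum is $\sum_{j\in N_i^{\rm in}(t)}1/d_j(t)=|N_i^{\rm in}(t)|/d(t)=d(t)/d(t)=1$, using that every in- and out-degree (with self-loops) equals $d(t)$. Hence $A(t)\1=\1$, so $\pi(t)=A(1)\cdots A(t)\1=\1$ for every $t$, every coordinate equals $1$, and therefore $\delta=1$.
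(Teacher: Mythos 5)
Your proof is correct, and its skeleton coincides with the paper's: short products ($t<nB$) are handled by the same self-loop/diagonal bound $\pi_i(t)\ge [A(1)\cdots A(t)]_{ii}\ge n^{-t}$, the regular case is the same doubly-stochastic computation (which the paper dismisses as ``trivial''), and the heart of the matter is positivity of products over roughly $nB$ steps of a $B$-strongly-connected sequence, which both you and the paper take as a known external fact. Where you genuinely depart is in the treatment of \emph{long} products. The paper leans on its cited references for the stronger claim that every entry of $A'(t)\cdots A'(1)$ is at least $n^{-nB}$ for \emph{all} $t>(n-1)B$ --- a bound uniform over arbitrarily long products --- and then simply sums columns. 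You invoke only the fixed-length version of that fact (all entries of the single block $R=A(1)\cdots A(nB)$ are at least $n^{-nB}$, which is the transpose of the cited statement at $t=nB$), and you recover the uniform-in-$t$ bound yourself via the factorization $\pi(t)=R\,v$ with $v=A(nB+1)\cdots A(t)\1\ge 0$ and $\1'v=n$ from column-stochasticity. This makes your argument more self-contained: the step the paper outsources to the literature is exactly what your mass-conservation decomposition supplies, and your observation that the positive block must sit on the \emph{left} (the right-hand grouping fails because row sums of products of column-stochastic matrices admit no a priori lower bound) identifies precisely the mechanism that makes the literature's uniform bound true. One caution: your parenthetical justification of the key fact (fix one connecting path, realize one edge per window) is informal, since the union graphs change from window to window; the rigorous version is the growing-reachable-set argument, together with the observation that $[1,nB]$ contains $n-1$ complete aligned windows of the form $\{kB,\ldots,(k+1)B-1\}$, so the bound indeed holds for your increasing-order product as well. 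Since you, like the paper, explicitly invoke this as a known external fact rather than prove it, this is not a gap in your proof.
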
 

\begin{proof} 
By the definition of matrices $A(t)$ in Eq.~\eqref{eq:defA}, we have $[A(t)]_{ii}=1/d_i(t)$. Since $d_i(t)\le n$,
it follows that $[A(t)]_{ii} \aor{\geq} 1/n$ for all $t$ and $i$.
%that there is a self-loop at every node, 
Therefore, for all $i$,
\[  [A'(t+1) \cdots A'(0) ]_{ii} \geq \frac{1}{n}  [A'(t) \cdots A'(0)]_{ii}
\ \hbox{for $t\ge0$}.\] 
Thus, we certainly have  $[\1'A'(t) \cdots A'(1)]_i \geq 1/n^{nB}$ for all $i$ and all $t$ in 
the range $1 \leq t \leq n^{nB}$. 
However, it was 
shown in \cite{Morse, TBA} that for $t > (n-1)B$, every entry of $A'(t) \cdots A'(1)$ is positive and has value 
at least $1/n^{nB}$. 
Since $n^{nB} > (n-1)B$, this proves the bound $\delta' \geq 1/n^{nB}$. The final claim that 
$\delta'=1$ for a sequence of regular graphs is trivial. 
\end{proof} 

\begin{remark}\label{remark:phi} 
As an immediate consequence of the definition of $\delta'$, 
we have $\phi_j(s) \geq \delta'/n$ for all $j$. 
\end{remark}

By taking transposes and applying Lemmas \ref{lemma:crate}, \ref{clambdachoices}, \ref{lemma:delta}, 
 we immediately obtain the
 following result on the products $A(t) \cdots A(s)$. 
 For convenience, let us adopt the notation of denoting these products
 by $A(t:s)$, i.e.,
 \[A(t:s) \triangleq A(t) \cdots A(s)\quad\hbox{for all }t\ge s\ge0.\]

\begin{corollary} \label{rowstochasticcorr} 
Let the graph sequence $\{G(t)\}$ be uniformly strongly connected.  
Then, the following statements are valid:
\begin{itemize}
\item[(a)]  There is a sequence $\{\phi(t)\}$ of stochastic vectors $\phi(t)\in\R^n$ 
such that the matrix difference 
$\mbox{$A(t:s)$} - \phi(t) \1'$ for $t\ge s$ 
decays geometrically, i.e., for all $i,j=1,\ldots,n,$
\[|[A(t:s)]_{ij}-\phi_i(t)|\le C\lambda^{t-s}\quad\hbox{for all }t\ge s\ge0,\] 
where we can always choose 
$$C=4, \quad \lambda = \left( 1 - 1/n^{nB} \right)^{1/B}.$$
If in addition each $G(t)$ is regular, %$A(t)$ is doubly stochastic, 
we may choose 
$$C=2\sqrt{2}, \quad \lambda = \left( 1 - 1/(4n^3) \right)^{1/B},$$ or 
$$C=\aor{\sqrt{2}}, \quad \lambda = \max_{t\ge0} \aor{\sigma_2(A(t))},$$
whenever $\sup_{t\ge0} \aor{\sigma_2(A(t))}<1$. %the last quantity is below $1$.  
\item[$($b)] 
The quantity 
\[ \delta = \inf_{t=0,1, \ldots} \left(\min_{1\le i\le n}  [A(t) \cdots A(0) \aor{\1}]_i \right) \]
satisfies \[ \delta \geq \frac{1}{n^{nB}}.\] 
Moreover, if the graphs $G(t)$ are regular, we have $\delta = 1$. 
\item[(c$)$]  The stochastic vectors $\phi(t)$ satisfy for all $j,$
\[ \phi_j(t) \geq  \frac{\delta}{n}\qquad\hbox{for all times $t\ge0$}.\]   
\end{itemize}
\end{corollary}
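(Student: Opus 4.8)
The plan is to obtain the corollary directly from Lemmas~\ref{lemma:crate}--\ref{lemma:delta} by transposition, handling parts (a) and (b) separately.

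For part (a), I would first record the elementary identity
\[
A(t:s)' = \bigl(A(t)A(t-1)\cdots A(s)\bigr)' = A'(s)A'(s+1)\cdots A'(t),
\]
so that, since each $A(t)$ is column-stochastic, every product $A(t:s)$ is column-stochastic, and the columns of $A(t:s)$ converging to a common vector is equivalent to the rows of $A(t:s)'$ converging to a common vector. Convergence of such a product of the row-stochastic matrices $A'(\cdot)$ to a rank-one limit, at a geometric rate, is of exactly the type established in Lemma~\ref{lemma:crate} (after accounting for the order of the factors, discussed below), and the explicit constants are those of Lemma~\ref{clambdachoices}. I would therefore define $\phi(t)$ to be the common limit of the columns of $A(t:s)$ as the length $t-s$ of the product grows, with stochasticity of $\phi(t)$ inherited from the column-stochasticity of $A(t:s)$. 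The entrywise bound $|[A(t:s)]_{ij}-\phi_i(t)|\le C\lambda^{t-s}$ then follows by transcribing the estimate of Lemma~\ref{clambdachoices} through the transpose: the decay rate $\lambda$ is preserved exactly, while the constant is inflated by a bounded (at most twofold) factor coming from the reindexing, which is what turns the values $C=2,\sqrt2,1$ of Lemma~\ref{clambdachoices} into the $C=4,2\sqrt2,2$ claimed here. The doubly stochastic refinements are read off in the same way from part (b) of Lemma~\ref{clambdachoices}.

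Part (b) requires essentially no new work. The scalar $\delta$ is defined by the very same expression as in Lemma~\ref{lemma:delta}, so the bounds $\delta\ge 1/n^{nB}$ and $\delta=1$ for regular graphs are quoted verbatim from that lemma. The final inequality $\phi_j(t)\ge\delta/n$ is Remark~\ref{remark:phi}: each entry $\phi_j(t)$ is a limit of entries of the products appearing in the definition of $\delta$, which are bounded below by $\delta/n$ by construction, so the bound transfers immediately to the column-limit vectors defined in part~(a).

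The main obstacle I anticipate is one of bookkeeping rather than of mathematical substance: one must be careful that $A(t:s)=A(t)\cdots A(s)$ is the backward (decreasing-index) product, whereas Lemma~\ref{lemma:crate} is phrased for products of the transposes $A'(\cdot)$ taken in the opposite order, so that the limiting vector $\phi$ ends up indexed by the larger index $t$ rather than by the smaller index $s$. Checking that the same $\lambda$ survives this re-indexing, and that only the constant $C$ is affected and only by a bounded factor, is the one step that deserves genuine care; everything else is a direct appeal to the previously established lemmas.
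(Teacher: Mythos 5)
Your overall route is exactly the paper's: transpose $A(t:s)$, invoke Lemmas \ref{lemma:crate}, \ref{clambdachoices}, \ref{lemma:delta} and Remark \ref{remark:phi}, and your treatment of part (b) is the same as the paper's and needs no new work. However, the step you set aside as ``bookkeeping rather than mathematical substance'' is in fact the entire content of the proof, and the account you give of it would not survive being written out. Transposing gives $A(t:s)' = A'(s)A'(s+1)\cdots A'(t)$, which is a Lemma-\ref{lemma:crate}-type product for the graph sequence obtained from $G(s),\dots,G(t)$ by reversing \emph{both} the time order of the graphs \emph{and} the direction of every edge. Edge reversal is harmless, since it preserves strong connectivity of each block union. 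Time reversal is not: the length-$B$ blocks of the reversed sequence straddle the blocks $[kB,(k+1)B)$ of the original sequence, so the union of graphs over a block of the reversed sequence need not be strongly connected. In other words, the reversed sequence is genuinely \emph{not} (an initial segment of) a $B$-connected sequence --- the paper says this explicitly --- so Lemmas \ref{lemma:crate} and \ref{clambdachoices} cannot be applied to it as is. A pure re-indexing argument therefore fails; moreover, if re-indexing alone did work, it would produce \emph{no} inflation of the constant, so it cannot be what turns $C=2,\sqrt{2},1$ into $C=4,2\sqrt{2},2$; your factor of two is reverse-engineered from the statement rather than derived.

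What is missing is the paper's fix: after discarding at most $B$ graphs from the start of the reversed sequence, the remaining blocks align with the original ones and the sequence is again $B$-connected, so Lemma \ref{lemma:crate}(b) applies with the exponent $t-s$ replaced by $t-s-B$. The resulting loss of a factor $\lambda^{-B}$ is then absorbed into the constant using the specific values of $\lambda$ at hand: $\lambda^{B} = 1 - 1/n^{nB} \ge 1/2$ (respectively $\lambda^{B} = 1 - 1/(4n^3) \ge 1/2$), hence $\lambda^{-B} \le 2$. This, and not re-indexing, is the source of the doubling of $C$, and it genuinely uses the closeness of $\lambda$ to $1$. A smaller point: for fixed $t$ there is no actual ``limit as the length $t-s$ grows,'' since $s \ge 0$ ranges over finitely many values; $\phi(t)$ should instead be extracted from the nested max--min intervals of the entries of the rows of $A(t:s)$ as $s$ decreases (or simply taken to be a column of $A(t:0)$), with stochasticity inherited from column-stochasticity of the product, as you say.
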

\begin{proof} The results follow by employing \aor{Corollary \ref{almoststrongly}}, Lemmas \ref{clambdachoices}, \ref{lemma:delta}, and Remark \ref{remark:phi}, where we take the transposes of the matrices. The only
aspect that needs to be verified is that these lemmas can be applied, which is routine, 
with the exception of the issue of the uniform strong connectivity requiring elaboration.

When we transpose $A(t:s)$ to obtain the
product $A'(s) \cdots A'(t-1) A'(t)$, we have reversed the order in which the matrices appear.
Moreover, by taking the transposes of each matrix, we have effectively reversed the direction of 
every edge in each graph $G(t)$. 
\aor{In the case when $\sup_{t \geq 0} \sigma_2(A(t)) < 1$, it is easy
to see  that $B=1$ and the resulting ``reversed'' sequence  is connected at every step, 
so the bound of Lemma \ref{clambdachoices} applies verbatim. In the 
other cases,   the resulting sequence is still strongly connected
once we throw out at most $B$ graphs from the start of the sequence.
The bounds of Corollary \ref{almoststrongly} thus apply} 
with $t-s$ replaced by $t-s-B$, which we take care of by instead doubling the constant $C$. 
\end{proof} 

The parameter $\delta$ as defined in Lemma \ref{lemma:delta}   
may be thought of as a measure of imbalance of the influence among the nodes.  Indeed, 
$\delta$ is defined as the best lower bound on the row sums of the matrices $A(t:s)$. In the case when each of the graphs $G(t)$ is regular, the matrices $A(t)$ will be doubly
stochastic and we will have $\delta=1$ as previously remarked (i.e., no imbalance of influence). 
By contrast, when $\delta \approx 0$, there is a node $i$ 
such that the $i$'th row in some $A(t:s)$ will have entries which are all nearly zero; in short, it is almost as if node $i$ has no in-neighbors at all.

%and it is uninfluenced by what occurs in the rest of the network. In cases intermediate between these two extremes, $\delta$ reflects
%the existence of a node whose influence on the rest of the network, measured by summing up all the weights placed on it in $A(t:s)$, is small. 

We proceed with our sequence of intermediate lemmas for the proof of Lemma \ref{lemma:newx}. 
We will now need some auxiliary results on the convolution
of two scalar sequences, as in the following statement.
\begin{lemma} 
  \label{lemma:betagama}
  {\rm  (\cite{Ram2010} Lemma 3.1)}
Let $\{\gamma_k\}$ be a scalar sequence.
\begin{itemize}
\item [(a)] If $\lim_{k\to\infty}\gamma_k=\gamma$  and $0< \beta<1,$ then
$\lim_{k\to\infty}\sum_{\ell=0}^k \beta^{k-\ell}\gamma_\ell
=\frac{\gamma}{1-\beta}.$
\item [(b)] If $\gamma_k\ge0$ for all $k$,
 $\sum_{k=0}^\infty\gamma_k<\infty$ and $0< \beta<1,$  then
  $\sum_{k=0}^\infty
  \left(\sum_{\ell=0}^k\beta^{k-\ell}\gamma_\ell\right)<~\infty.$
%\item [(c)]
%If $\limsup_{k\to\infty}\gamma_k=\gamma$
%and $\{\zeta_k\}$ is a positive scalar sequence
%with $\sum_{k=0}^{\infty} \zeta_k =~\infty,$ then
%$\limsup_{k\to\infty}\frac{\sum_{\ell =0}^{k} \gamma_\ell \zeta_\ell
% }{\sum_{\ell=0}^{k} \zeta_\ell} \le \gamma.$
%In addition, if $\liminf_{k\to\infty}\gamma_k=\gamma$, then
%$\lim_{k\to\infty}\frac{\sum_{\ell=0}^{k} \gamma_\ell \zeta_\ell
%  }{\sum_{\ell=0}^{k} \zeta_\ell}~=~\gamma.$
\end{itemize}
\end{lemma}

With these pieces in places, we can now proceed to the proof of Lemma \ref{lemma:newx}. Our argument will 
rely on Corollary~\ref{rowstochasticcorr} on the products $A(t:s)$ and the just-stated 
Lemma~\ref{lemma:betagama} on the convolution sequences. 

\begin{proof}[Proof of Lemma \ref{lemma:newx}]
\noindent
(a) \ 
By inspecting Eq.~(\ref{eq:pert}) it is easy to see that for all $t\ge0$,
\begin{equation}\label{eq:xt}
x(t+1) =A(t:0)x(0) + \sum_{s=1}^{t}A(t:s)\e(s) + \e(t+1),
\end{equation}
which implies
\begin{equation} \label{eq:ax} 
A(t+1)x(t+1) = A(t+1:0)x(0) + \sum_{s=1}^{t+1}A(t+1:s)\e(s).\end{equation} 
Moreover, since each
$A(t)$ is column-stochastic, we have that $\1' A(t) = \1'$ and Eq.~(\ref{eq:xt}) 
further implies that
\begin{equation} \label{eq:xsum}
\1'x(t+1)  = \1'x(0) + \sum_{s=1}^{t+1}\1'\e(s)
\qquad\hbox{for all }t\ge0. \end{equation}
Now, from~Eq.~(\ref{eq:ax}) and Eq.~(\ref{eq:xsum}) we obtain for all $t\ge0$,
\begin{eqnarray}\label{eq:oo2}
&& A(t+1)x(t+1) - \phi(t+1)\1'x(t+1) \cr
&& =  \left( A(t+1:0)- \phi(t+1)\1' \right) x(0) \cr
&& \ \ + \sum_{s=1}^{t+1}\left( A(t+1:s)-\phi(t+1)\1' \right)\e(s).
\end{eqnarray}
According to Corollary~\ref{rowstochasticcorr}, if we define $D(t,s)$ to be 
\[ D(t,s) = A(t:s)- \phi(t) \1'\]
then we have the entry-wise decay bound
\begin{align}\label{eq:dnerr}
|[D(t,s)]_{ij}|\le C\lambda^{t-s} \ \hbox{for all $i,j$ and $t\ge s\ge 0$},\end{align}
where the constants $C>0$ and $\lambda \in (0,1)$ have the properties listed in 
Corollary~\ref{rowstochasticcorr}. 

 Therefore, from relation~\eqref{eq:oo2} it follows for $t\ge0$,
\begin{align*}
&A(t+1)x(t+1) =  \phi(t+1)\1'x(t+1) \cr
& + D(t+1,0) x(0) + \sum_{s=1}^{t+1} D(t+1,s) \e(s).\end{align*}
Thus, for $t\ge1$ we have
\begin{align}\label{eq:oo3}
w(t+1) =
& A(t)x(t) =\phi(t)\1'x(t) \cr
& + D(t,0) x(0) + \sum_{s=1}^{t} D(t,s) \e(s).
\end{align}
We may derive a similar expression for $y(t+1)$: 
\begin{align}\label{eq:oo4}
y(t+1) &= A(t:0)y(0)\cr
&=\phi(t)\1'y(0)+ D(t,0)y(0)\cr
& =\phi(t) n+ D(t:0)\1,
\end{align}
which holds for all $t\ge0$.
From~\eqref{eq:oo3} and~\eqref{eq:oo4} we obtain for every $t\ge1$ and all~$i$,
\begin{align*}
& z_i(t+1)=\frac{w_i(t+1)}{y_i(t+1)}\cr
& =\frac{\phi_i(t)\,\1'x(t) + [D(t:0) x(0)]_i + \sum_{s=1}^{t} [D(t:s) \e(s)]_i}
{\phi_i(t)\,n+ [D(t:0)\1]_i}.\end{align*}
Therefore,
\begin{align*}
& z_i(t+1) -\frac{\1'x(t)}{n} \cr %\frac{w_i(t+1)}{y_i(t+1)}-\frac{\1' x(t)}{n}
&=\frac{\phi_i(t)\,\1'x(t) + [D(t:0) x(0)]_i + \sum_{s=1}^{t} [D(t:s) \e(s)]_i}
{\phi_i(t)\,n+ [D(t:0)\1]_i} \cr
&\ \ -\frac{\1'x(t)}{n}.\end{align*}
By bringing the fractions to a common denominator, after the cancellation of some terms, we find that
\begin{align*}
&z_i(t+1) -\frac{\1'x(t)}{n} \cr
&= \frac{n[D(t:0) x(0)]_i + n\sum_{s=1}^{t} [D(t:s) \e(s)]_i}  {n\left(\phi_i(t)\,n+ [D(t:0)\1]_i \right)}\cr
& \ \ - \frac{\1'x(t) [D(t:0)\1]_i}{n\left(\phi_i(t)\,n+ [D(t:0)\1]_i \right)}.
\end{align*} 
Observe that the denominator of the above fraction is $n$ times the $i$'th row sum of $A(t:0)$. By definition of $\delta$, this row sum is at least $\delta$, and consequently 
\[\phi_i(t)\,n+ [D(t:0)\1]_i = [A(t:0)\1]_i\ge \delta. \] 
Thus, for all $i$ and $t\ge1$,
\begin{eqnarray*}
&&\left|z_i(t+1) - \frac{\1' x(t)}{n}\right|\cr
&&\hbox{}\cr
&&\le \frac{\left| [D(t:0) x(0)]_i + \sum_{s=1}^{t} [D(t:s) \e(s)]_i\right|} 
{\phi_i(t)\,n+ [D(t:0)\1]_i }\cr
&&\hbox{}\cr
&&\ \ +\frac{\left| \1'x(t) [D(t:0)\1]_i\right|}{n\left(\phi_i(t)\,n+ [D(t:0)\1]_i \right)}\cr
&&\le \frac{1}{\delta}\,\left( \left(\max_{j}|[D(t:0)]_{ij}| \right)\|x(0)\|_1 \right.\cr
&& \ + \left. 
\sum_{s=1}^{t}\left(\max_{j}| [D(t:s)]_{ij} |\right)\|\e(s)\|_1 \right)\cr
&& \ 
+\frac{1}{n\delta}|\1'x(t)|\left(\max_{j}|[D(t:0)]_{ij}|\right) n.
\end{eqnarray*}
Factoring $n$ out in the last term,
and using estimates for $| [D(t:s)]_{ij} |$ as given in~\eqref{eq:dnerr}, we obtain 
\begin{eqnarray*}
&&\left|z_i(t+1) - \frac{\1' x(t)}{n}\right|
\le \frac{C}{\delta}\lambda^t\|x(0)\|_1\cr
&& \ + \frac{C}{\delta}\left(\sum_{s=1}^{t}\lambda^{t-s}\|\e(s)\|_1 + |\1'x(t)|\lambda^t\right).
\end{eqnarray*}
Now we look at the term $\1'x(t)$. From Eq. (\ref{eq:xsum}), we have
\[|\1'x(t)| \le \|x(0)\|_1 +  \sum_{s=1}^t \|\e(s)\|_1.\]
From the preceding two relations it follows that for all $i$ and $t\ge1$,  
\begin{eqnarray*}
&&\left|z_i(t+1) - \frac{\1' x(t)}{n}\right|
\le \frac{C}{\delta}\,\lambda^t\|x(0)\|_1\cr
&& \ +\frac{C}{\delta} \sum_{s=1}^{t}\lambda^{t-s}\|\e(s)\|_1 \cr
&& \ + \frac{C}{\delta} \ \lambda^t\left( \|x(0)\|_1 +  \sum_{s= 1}^t \|\e(s)\|_1\right)\cr
&=&
\frac{C}{\delta}\,\left(2\lambda^t\|x(0)\|_1 + 2 \sum_{s=1}^{t}\lambda^{t-s}\|\e(s)\|_1 \right).
\end{eqnarray*} Since we were able to choose $C \leq 4$ in all the cases considered in Lemma \ref{rowstochasticcorr}, we may choose $C=4$ to obtain
the result in part (a).

\noindent
(b) \ By letting $t\to\infty$ in the preceding relation, since $\lambda\in(0,1)$, we find that
for all $i,$
\[\lim_{t\to\infty} \left|z_i(t+1) - \frac{\1' x(t)}{n}\right|
\le \lim_{t\to\infty}\sum_{s=1}^{t}\lambda^{t-s}\|\e(s)\|_1.\]
When $\e_i(t)\to0$ for all $i$, then $\|\e(t)\|_1\to 0$ and, by Lemma~\ref{lemma:betagama}(a),
we conclude that 
\[\lim_{t\to\infty}\sum_{s=1}^{t}\lambda^{t-s}\|\e(s)\|_1=0,\]
and the result follows from the preceding two relations.

\noindent
$($c) \
Since $\{\a(t\}$ is positive and non-increasing sequence, we have $\a(t+1)\le \a(1)$ for all $t\ge0$ 
and $\a(t+1)\le \a(s)$ for  all $t\ge s\ge 0$. Using these relations, we obtain
\begin{align}\label{eq:alphazbound}
&\ \ \a(t+1)\left|z_i(t+1) - \frac{\1' x(t)}{n}\right|\cr
\le &
\frac{2C}{\delta}\,\left(\a(1)\lambda^t\|x(0)\|_1 +  \sum_{s=1}^{t}\lambda^{t-s}\a(s)\|\e(s)\|_1 \right). 
\quad
\end{align}
Since $\lambda\in (0,1)$, the sum $\sum_{t=1}^\infty \lambda^t$ is finite, and by Lemma~\ref{lemma:betagama}(b)
the sum $\sum_{t=1}^\infty \sum_{\ell=1}^t \lambda^{t-\ell} \a(\ell)\|\e(\ell)\|_1$ is finite. Therefore
\[\sum_{t=0}^\infty \a(t+1)\left|z_i(t+1) - \frac{\1' x(t)}{n}\right|<\infty.\]
\end{proof}

Lemma \ref{lemma:newx}, which we have just proved, is the central result of this section.
It states that each of the sequences $z_i(t+1)$ tracks the average $\bar x(t) = \1'x(t)/n$ increasingly well as time goes on. We will later require a corollary of this lemma showing that a weighted 
time-average of each $z_i(t+1)$ tracks a weighted time-average of $\bar x(t)$.
The next corollary gives a precise statement of this result. 
In the derivation, we also use the following inequality
\begin{equation}\label{eq:sqrtsum} 
\sum_{k=0}^t \frac{1}{\sqrt{k+1}} \ge \sqrt{t+1}\qquad\hbox{for all }t\ge1,  
\end{equation} 
which follows by
\[\sum_{k=0}^t \frac{1}{\sqrt{k+1}} \ge \int_0^{t+1} \frac{du}{\sqrt{u+1}}
= 2 \left( \sqrt{t+2} - 1 \right)\]
and the relation $2 \left( \sqrt{t+2} - 1 \right)\ge \sqrt{t+1}$ for all $t\ge1$.

\begin{corollary} \label{cor:sqrtavg} 
Suppose that all the assumptions of Lemma \ref{lemma:newx} are satisfied. Moreover,
let $\alpha(t) = 1/\sqrt{t}$ for all $t\ge1$ and the perturbations $\e_i(t)$ are bounded as follows:
\[\|\e(t)\|_1 \leq D/\sqrt{t}\qquad\hbox{for all }t\ge1,\] 
for some scalar $D>0.$
Defining 
\[ \bar x(t) = \frac{1}{n} \sum_{i=1}^n x_i(t) \qquad\hbox{for all }t\ge0,\] 
we have for every $i=1, \ldots, n$ and $t\ge1$, 
\begin{align} \label{eq:zxdiff}
&\sum_{k=0}^t \alpha(k+1) \left| z_i(k+1) -\bar x (k) \right| \cr
& \leq \frac{8}{\delta(1-\lambda)} \left(\|x(0)\|_1 + D(1+ \ln t)\right),\end{align}
\begin{align}\label{eq:averdiff}
&\frac{1}{\sum_{k=0}^t \alpha(k+1)} \sum_{k=0}^t \alpha(k+1)|z_i(k+1) - \bar{x}(k)|\cr
&\leq 8 \frac{\|x(0)\|_1 + D (1 + \ln t)}{\delta(1-\lambda) \sqrt{t+1} },
\end{align}
where $\delta\in (0,1]$ and $\lambda\in (0,1)$ are as given in Lemma~\ref{lemma:newx}.
\end{corollary}

\begin{proof} 
From Lemma~\ref{lemma:newx}(a) we have for all $i$ and all $t\ge1$, 
\begin{eqnarray*}
&&\sum_{k=1}^t \alpha(k+1) \left| z_i(k+1) -\bar x (k) \right| \cr
& \leq & \frac{8}{\delta} \sum_{k=1}^t \frac{\lambda^k}{\sqrt{k+1}} \|x(0)\|_1  \cr
&& + \frac{8}{\delta} \sum_{k=1}^t \alpha(k+1) \sum_{s=1}^k \lambda^{k-s} \|\e(s)\|_1 \cr
& \leq & \frac{8}{\delta} \frac{\lambda}{1-\lambda} \|x(0)\|_1 
+  \frac{8D}{\delta} \sum_{k=1}^t \sum_{s=1}^k \frac{\lambda^{k-s}}{s}.
\end{eqnarray*}
Note that 
\[\sum_{k=1}^t \sum_{s=1}^k \frac{\lambda^{k-s}}{s}
=\sum_{s=1}^t \frac{1}{s}\sum_{k=s}^t 
\lambda^{k-s}\le \sum_{s=1}^t \frac{1}{s}\frac{1}{1-\lambda}.\]
Furthermore, since
\[\sum_{s=1}^t \frac{1}{s}= 1 + \sum_{s=2}^t \frac{1}{s}\le 1+ \int_1^{t}\frac{du}{u}=1+ \ln t,\]
it follows that 
\begin{eqnarray} \label{eq:1}
&&\sum_{k=1}^t \alpha(k+1) \left| z_i(k+1) -\bar x (k) \right| \cr
& \leq & \frac{8\lambda}{\delta(1-\lambda)} \|x(0)\|_1 + \frac{8D}{\delta} \frac{ (1+\ln t)}{1-\lambda}.\end{eqnarray}
Note that for $k=0$, we have
\[\alpha(1) \left| z_i(1) -\bar x (0) \right|\le |z_i(1)|+|\bar x(0)|.\]
From the definition of the perturbed push-sum method, we have
\[z_i(1)=\frac{w_i(1)}{y_i(1)}, \]
\[ w_i(1)= \sum_{j\in N^{\rm in}_i(0)} \frac{x_j(0)}{d_j(0)}, 
\qquad y_i(1)=\sum_{j\in N^{\rm in}_i(0)}\frac{1}{d_j(0)},\]
where the last equality follows from $y_i(0)=1$ for all $i$. 
Thus, \aor{$z_i(1)$} is a weighted average of the entries in $x(0)$, while $\bar x(0)$ is the average of these entries.
Hence,
\[|z_i(1)|+|\bar x(0)|\le 2\max_{i} |x_i(0)|\le 2\|x(0)\|_1.\]  
Since $\delta\le1$, we see that
\begin{equation}\label{eq:2}
 \alpha(1) \left| z_i(1) -\bar x (0) \right|\le \frac{2}{\delta}\|x(0)\|_1< \frac{8}{\delta}\|x(0)\|_1.
 \end{equation}
By summing the relations in Eqs.~\eqref{eq:1} and~\eqref{eq:2}, we obtain
the estimate in Eq.~\eqref{eq:zxdiff}.
The relation in Eq.~\eqref{eq:averdiff} follows from $\a(t)=1/\sqrt{t}$ and
Eqs.~\eqref{eq:sqrtsum}--\eqref{eq:zxdiff}.
%\begin{align} \label{eq:sqrtsum} 
%\sum_{k=1}^t \alpha(k+1) 
%&\geq \int_1^{t+1} \frac{du}{\sqrt{u+1}}
%= 2 \left( \sqrt{t+2} - \sqrt{2} \right).   \end{align}  
\end{proof}

We conclude this section by noting that Lemma~\ref{lemma:newx} and 
Corollary~\ref{cor:sqrtavg} \aor{can be extended to the case when} $x_i(t)$ (and, by extension, $z_i(t)$) is a $d$-dimensional vector,
by applying the results to each coordinate component of the space.

%%%%%%%%%%%%%%%%%%%%%%%%%%%%%%%%%%%%%%%%%%%%%%
\section{Convergence Results for Subgradient-Push Method}\label{sec:optim}
%%%%%%%%%%%%%%%%%%%%%%%%%%%%%%%%%%%%%%%%%%%%%%
We turn now to the proofs of our main results,  namely Theorems~\ref{mainthm} and~\ref{convrate}.
Our arguments will crucially rely on the convergence results for the perturbed push-sum method we have established in Section~\ref{sec:averaging}. 

We give a brief, informal summary of the main ideas behind our argument. The convergence result for 
the perturbed push-sum method of Section~\ref{sec:averaging} implies that, under the appropriate assumptions, the entries of $\bz_i(t)$ get close to each other over time, 
and consequently $\bz_i(t)$ approaches
a multiple of the all-ones vector. Thus, every node takes a subgradient of its own function 
$f_i$ at nearly the same point.
Over time, these subgradients are averaged over the nodes 
by the push-sum-like updates of our method. Consequently, the subgradient-push method 
approximates the ordinary (centralized) subgradient algorithm applied to 
the average function  $\frac{1}{n} \sum_{j=1}^n f_j$.

We now begin the formal process of proving Theorems~\ref{mainthm} and~\ref{convrate}. 
In what follows, we will use a deterministic counterpart of the well-known (almost) supermartingale convergence result
(\cite{robbins1971}; see also~\cite{polyak87}, Lemma 11, Chapter~2.2). 
The result is given in the following lemma.

\begin{lemma}\label{lemma:polyak}
Let $\{v_t\}$ be a non-negative scalar sequence such that 
\[v_{t+1}\le (1+b_t) v_t - u_t +c_t\qquad\hbox{for all }t\ge0,\]
where $b_t\ge0,$ $u_t\ge0$ and $c_t\ge0$ for all $t\ge0$ with $\sum_{t=0}^\infty b_t<\infty$, and 
$\sum_{t=0}^\infty c_t<\infty$. 
Then, the sequence $\{v_t\}$ converges to some $v\ge0$ and 
$\sum_{t=0}^\infty u_t<\infty$.
\end{lemma}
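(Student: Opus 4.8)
The plan is to reduce the inequality to the case $b_t \equiv 0$ by an exact rescaling, after which a standard tail-sum (telescoping) argument applies. First I would exploit the summability of $\{b_t\}$: since $\sum_{t=0}^\infty b_t < \infty$ and $\ln(1+b_s) \le b_s$, the partial products $B_t = \prod_{s=0}^{t-1}(1+b_s)$ (with $B_0 = 1$) form a non-decreasing sequence converging to a finite limit $B_\infty \in [1,\infty)$, and in particular $1 \le B_t \le B_\infty$ for all $t$.

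Next I would rescale. Dividing the hypothesis $v_{t+1} \le (1+b_t)v_t - u_t + c_t$ by $B_{t+1} = (1+b_t)B_t$ and writing $\hat v_t = v_t/B_t$, $\hat u_t = u_t/B_{t+1}$, $\hat c_t = c_t/B_{t+1}$ yields
\[ \hat v_{t+1} \le \hat v_t - \hat u_t + \hat c_t, \]
where $\hat v_t \ge 0$, $\hat u_t \ge 0$, and, because $B_{t+1} \ge 1$, we have $\hat c_t \le c_t$ so that $\sum_{t=0}^\infty \hat c_t < \infty$. This removes the multiplicative factor entirely.

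The core step is the tail-sum trick. Set $\hat w_t = \hat v_t + \sum_{s=t}^\infty \hat c_s$, where the tail is finite by the summability of $\{\hat c_t\}$. Substituting the rescaled inequality gives $\hat w_{t+1} \le \hat w_t - \hat u_t \le \hat w_t$, so $\{\hat w_t\}$ is non-increasing; since it is also bounded below by $0$ (as $\hat w_t \ge \hat v_t \ge 0$), it converges to some $\hat w \ge 0$. Telescoping $\hat u_t \le \hat w_t - \hat w_{t+1}$ over $t = 0, \dots, T$ gives $\sum_{t=0}^T \hat u_t \le \hat w_0 < \infty$, hence $\sum_{t=0}^\infty \hat u_t < \infty$. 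Because $\sum_{s=t}^\infty \hat c_s \to 0$, it follows that $\hat v_t = \hat w_t - \sum_{s=t}^\infty \hat c_s$ converges to $\hat w$.

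Finally I would undo the rescaling: $v_t = B_t \hat v_t \to B_\infty \hat w =: v \ge 0$, which gives the claimed convergence, and $\sum_{t=0}^\infty u_t = \sum_{t=0}^\infty B_{t+1} \hat u_t \le B_\infty \sum_{t=0}^\infty \hat u_t < \infty$ using $B_{t+1} \le B_\infty$. The only real subtlety is the multiplicative $(1+b_t)$ coefficient, which obstructs a direct telescoping argument on $v_t$ itself; the rescaling by the convergent product $B_t$ is precisely what neutralizes it, and everything after that is the elementary monotone-convergence plus telescoping argument.
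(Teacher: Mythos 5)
Your proof is correct and complete. Note that the paper itself does not prove this lemma at all: it is stated as a known deterministic counterpart of the Robbins--Siegmund supermartingale convergence theorem, with citations to the original paper and to Polyak's book (Lemma 11, Chapter 2.2), so there is no in-paper argument to compare against. Your route --- rescaling by the convergent partial products $B_t=\prod_{s=0}^{t-1}(1+b_s)$ to eliminate the multiplicative factor, then the tail-sum construction $\hat w_t=\hat v_t+\sum_{s=t}^\infty \hat c_s$ to get a non-increasing bounded sequence, followed by telescoping --- is the standard proof of this classical result, and every step checks out: the bounds $1\le B_t\le B_\infty<\infty$, the domination $\hat c_t\le c_t$, the monotone convergence of $\hat w_t$, and the reconstruction $v_t=B_t\hat v_t\to B_\infty\hat w$ with $\sum_t u_t\le B_\infty\sum_t\hat u_t<\infty$ are all valid. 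In effect you have supplied the self-contained proof that the paper delegates to the literature.
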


Our first step is to establish a lemma pertinent to the convergence of a 
sequence satisfying a subgradient-like recursion. In the proof of this lemma,
we make use of Lemma~\ref{lemma:polyak}. 

\begin{lemma}\label{lemma:opt}
Consider a convex minimization problem $\min_{x \in \R^m} f(x)$, where $f:\mathbb{R}^m\to\mathbb{R}$ is a \aor{continuous} function. Assume that 
the solution set $X^*$ of the problem is nonempty. Let $\{x_t\}$ be a sequence
such that for all $x\in  \aor{X^*}$ and for all $t\ge0,$
\[\|x_{t+1}-x\|^2 \le (1+b_t)\|x_t-x\|^2 - \a_t\left (f(x_t)-f(x)\right) +c_t,\]
where $b_t\ge0,$ $\a_t\ge0$ and $c_t\ge0$ for all $t\ge0$, with $\sum_{t=0}^\infty b_t<\infty$, 
$\sum_{t=0}^\infty \a_t=\infty$ and $\sum_{t=0}^\infty c_t<\infty$. 
Then, the sequence $\{x_t\}$ converges to some solution $x^*\in X^*$. 
\end{lemma}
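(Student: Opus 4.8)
The plan is to reduce the statement to the deterministic supermartingale result of Lemma~\ref{lemma:polyak} and then upgrade a subsequential limit to convergence of the whole sequence. First I would fix an arbitrary optimal point $x^*\in X^*$ and specialize the hypothesis to the comparison point $x=x^*$. Since $x^*$ is a global minimizer of $f$ on $\R^m$, the term $f(x_t)-f(x^*)$ is nonnegative, so upon setting $v_t=\|x_t-x^*\|^2$ and $u_t=\a_t\left(f(x_t)-f(x^*)\right)\ge 0$, with $b_t,c_t$ as given, the recursion takes exactly the form required by Lemma~\ref{lemma:polyak}. That lemma delivers two conclusions simultaneously: the sequence $\{\|x_t-x^*\|^2\}$ converges to some nonnegative limit, and $\sum_{t=0}^\infty \a_t\left(f(x_t)-f(x^*)\right)<\infty$.

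Next I would extract a minimizing subsequence. Because $\sum_t \a_t=\infty$ while $\sum_t \a_t\left(f(x_t)-f(x^*)\right)$ is finite, the nonnegative quantities $f(x_t)-f(x^*)$ cannot remain bounded away from zero; hence $\liminf_{t\to\infty}\left(f(x_t)-f(x^*)\right)=0$, and there is a subsequence $\{x_{t_k}\}$ along which $f(x_{t_k})\to f(x^*)$. The convergence of $\{\|x_t-x^*\|^2\}$ shows that $\{x_t\}$ is bounded, so $\{x_{t_k}\}$ has a convergent sub-subsequence with some limit $\tilde x$. A finite convex function on $\R^m$ is continuous, so $f(\tilde x)=\lim_k f(x_{t_k})=f(x^*)$, which means $\tilde x\in X^*$.

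The final step promotes convergence along this subsequence to convergence of the full sequence. Here I would re-run the first step, but now with the comparison point $x=\tilde x$, which is legitimate because the hypothesis holds for \emph{all} $x\in X$ and $\tilde x\in X^*\subseteq X$. Applying Lemma~\ref{lemma:polyak} a second time guarantees that $\{\|x_t-\tilde x\|^2\}$ converges to some limit $\ell\ge 0$. But along the sub-subsequence converging to $\tilde x$ we have $\|x_{t_k}-\tilde x\|^2\to 0$, which forces $\ell=0$. Since a convergent sequence shares its limit with every subsequence, we conclude $\|x_t-\tilde x\|^2\to 0$, i.e.\ $x_t\to\tilde x$ with $\tilde x$ a solution, as claimed.

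I expect the main obstacle to be precisely this last step: Lemma~\ref{lemma:polyak} only certifies that $\|x_t-x\|^2$ converges for each \emph{individually fixed} comparison point, and on its own this yields merely subsequential convergence of $\{x_t\}$ together with $\liminf$ optimality of the function values. The standard device that closes the gap — and the crux of the argument — is to invoke the recursion a second time at the subsequential limit $\tilde x$ itself; feeding back a specific optimizer as the comparison point pins the already-convergent quantity $\|x_t-\tilde x\|^2$ to the value $0$, thereby ruling out oscillation among distinct optimal points.
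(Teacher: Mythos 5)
Your proposal is correct and follows essentially the same route as the paper's own proof: apply Lemma~\ref{lemma:polyak} with an optimal comparison point to get convergence of $\|x_t-x^*\|^2$ and summability of $\a_t\left(f(x_t)-f^*\right)$, extract a subsequence converging to some $\tilde x\in X^*$ via boundedness and continuity of the convex function, and then use the already-established convergence of $\|x_t-\tilde x\|^2$ to conclude the whole sequence converges to $\tilde x$. The only cosmetic difference is that the paper states the distance-convergence conclusion for every $x^*\in X^*$ at once, whereas you phrase it as invoking the lemma a second time at $\tilde x$; these are the same argument.
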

\begin{proof}
By letting $x=x^*$ for arbitrary $x^*\in X^*$ and by defining $f^*=\min_{x\in \aor{\R^m}} f(x)$, we obtain
for all $t\ge0$,
\[\|x_{t+1}-x^*\|^2 \le (1+b_t)\|x_t-x^*\|^2 - \a_t\left (f(x_t)-f^*\right) +c_t.\] 
Thus, all the conditions of Lemma~\ref{lemma:polyak} are satisfied, and by this lemma
we obtain the following statements:
\begin{equation}\label{eq:xtconv}
\hbox{$\{\|x_t - x^*\|^2\}$ converges for each $x^*\in X^*$},
\end{equation}
\begin{equation}\label{eq:sumf}
\sum_{t=0}^\infty \a_t \left(f(x_t) - f^* \right)<\infty.\end{equation}
%where $f^*=\min_{x\in X} f(x)$.  
Since $\sum_{t=0}^\infty \a_t=\infty$, it follows from~\eqref{eq:sumf} that 
\[\liminf_{t\to\infty} f(x_t)=f^*.\] Let $\{x_{t_\ell}\}$ be a subsequence of $\{x_t\}$ such that
\begin{equation}\label{eq:lim}
\lim_{\ell\to\infty} f(x_{t_\ell})=\liminf_{t\to\infty} f(x_t)=f^*.\end{equation} 
Now, Eq. (\ref{eq:xtconv}) implies that the sequence $\{x_t\}$ is bounded.
 Thus, without loss of generality, we can assume that $\{x_{t_\ell}\}$ is converging to some 
 $\tilde x$ (for otherwise, we
can in turn select a convergent subsequence of  $\{x_{t_\ell}\}$). Therefore,
\[\lim_{\ell\to\infty} f(x_{t_\ell})=f(\tilde x),\]
which by Eq.~\eqref{eq:lim} implies that $\tilde x\in X^*$. By letting $x^*=\tilde x$ in  
Eq.~\eqref{eq:xtconv}
we obtain that $\{x_t\}$ converges to $\tilde x$.
\end{proof}

A key relations in the proofs of Theorems~\ref{mainthm} and~\ref{convrate} will be obtained 
by applying Lemma~\ref{lemma:opt} to the average process 
$$ \bar \bx(t) = \frac{1}{n}\sum_{i=1}^n \bx_i(t)\qquad\hbox{for }t\ge0,$$ 
where $\bx_i(t)$ is the sequence
generated by the subgradient-push method. We will need to
argue that $\bar \bx(t)$ satisfies the assumptions of Lemma~\ref{lemma:opt}, 
for which the following result will be instrumental.
 
\begin{lemma}\label{lemma:key} 
Under the same assumptions as in Theorem \ref{mainthm}, we
have for all $\bv\in\mathbb{R}^d$ and $t\ge0$,
\begin{eqnarray*}
&&\|\bar \bx(t+1) - \bv\|^2 
\le   \|\bar \bx(t) - \bv\|^2 \cr
&& \ - \frac{2\alpha(t+1)}{n} \left(F(\bar\bx(t)) - F(\bv) \right) \cr
&& \  + \frac{4\alpha(t+1)}{n} \sum_{i=1}^n L_i \|\bz_i(t+1)-\bar \bx(t)\|\cr
&&  \ +\alpha^2(t+1)\frac{\aor{L^2}}{n^2}.\end{eqnarray*}
\end{lemma}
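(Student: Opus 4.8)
The plan is to first reduce the average-value process $\bar\bx(t)$ to an ordinary centralized subgradient step (with perturbed evaluation points) and then run the standard distance-to-optimum expansion. First I would exploit column-stochasticity of $A(t)$, i.e. $\1'A(t)=\1'$, which is equivalent to $\sum_{i}A_{ij}(t)=1$ for each $j$. Summing the update $\bx_i(t+1)=\bw_i(t+1)-\alpha(t+1)\bg_i(t+1)$ over $i$ and using $\sum_{i}\bw_i(t+1)=\sum_{i}\sum_{j}A_{ij}(t)\bx_j(t)=\sum_{j}\bx_j(t)$ yields the clean identity
$$\bar\bx(t+1)=\bar\bx(t)-\frac{\alpha(t+1)}{n}\sum_{i=1}^n\bg_i(t+1).$$
This is the crucial structural fact: although the individual $\bx_i$ evolve through the asymmetric, non-doubly-stochastic weights $A(t)$, the mean performs an \emph{exact} averaged-subgradient step, where each $\bg_i(t+1)$ is a subgradient of $f_i$ evaluated not at $\bar\bx(t)$ but at the local point $\bz_i(t+1)$.

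Next I would expand $\|\bar\bx(t+1)-\bv\|^2$ using this identity. The quadratic term is immediate: by the triangle inequality and the subgradient bound (assumption (c)), $\frac{\alpha^2(t+1)}{n^2}\|\sum_i\bg_i(t+1)\|^2\le\frac{\alpha^2(t+1)}{n^2}\left(\sum_iL_i\right)^2=\alpha^2(t+1)\frac{(\1'L_F)^2}{n^2}$. The entire content of the lemma lies in the linear cross term $-\frac{2\alpha(t+1)}{n}\sum_i\langle\bg_i(t+1),\bar\bx(t)-\bv\rangle$. Here I would split $\bar\bx(t)-\bv=(\bar\bx(t)-\bz_i(t+1))+(\bz_i(t+1)-\bv)$. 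On the first piece, Cauchy--Schwarz together with $\|\bg_i(t+1)\|\le L_i$ gives a bound $L_i\|\bz_i(t+1)-\bar\bx(t)\|$; on the second piece, the defining subgradient inequality $\langle\bg_i(t+1),\bz_i(t+1)-\bv\rangle\ge f_i(\bz_i(t+1))-f_i(\bv)$ produces the objective-gap term.

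After summing and scaling, this yields $-\frac{2\alpha(t+1)}{n}\sum_i\bigl(f_i(\bz_i(t+1))-f_i(\bv)\bigr)$ plus a consensus-error term of size $\frac{2\alpha(t+1)}{n}\sum_iL_i\|\bz_i(t+1)-\bar\bx(t)\|$. The remaining step is to convert $\sum_if_i(\bz_i(t+1))$ into $F(\bar\bx(t))$: writing $f_i(\bz_i(t+1))=f_i(\bar\bx(t))+\bigl(f_i(\bz_i(t+1))-f_i(\bar\bx(t))\bigr)$ and using that bounded subgradients make each $f_i$ globally $L_i$-Lipschitz, I would bound the difference below by $-L_i\|\bz_i(t+1)-\bar\bx(t)\|$, so that $\sum_if_i(\bz_i(t+1))\ge F(\bar\bx(t))-\sum_iL_i\|\bz_i(t+1)-\bar\bx(t)\|$. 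Collecting the $F(\bar\bx(t))-F(\bv)$ term, the two consensus-error terms, and the quadratic term reproduces the claimed inequality exactly. I expect the only delicate point to be the bookkeeping around the two separate appearances of the consensus error $\|\bz_i(t+1)-\bar\bx(t)\|$ — one from displacing the evaluation point inside the subgradient inner product and one from replacing $\bz_i(t+1)$ by $\bar\bx(t)$ inside $f_i$ — since it is precisely their sum that produces the coefficient $4/n$ rather than $2/n$ in the stated bound.
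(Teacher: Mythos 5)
Your proposal is correct and follows essentially the same route as the paper's own proof: the exact mean-dynamics identity via column-stochasticity of $A(t)$, the split of the cross term through $\bz_i(t+1)$, the subgradient inequality plus Lipschitz continuity to recover $F(\bar\bx(t))$, and the two consensus-error contributions combining into the coefficient $4\alpha(t+1)/n$. No gaps to report.
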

\begin{proof} 
Let us define $\widetilde{x}_\ell (t)$ to be the vector in $\R^n$ which stacks up 
the $\ell$'th entries of all the vectors $\bx_i(t)$: formally, 
we define $\widetilde{x}_\ell(t)$ to be the vector whose $j$'th entry is the $\ell$'th entry of $\bx_j(t)$. 
Similarly, we define $\widetilde{g_\ell}(t)$ to be
the vector stacking up the $\ell$'th entries of the vectors $\bg_i(t)$: 
the $j$'th entry of $\widetilde{g_\ell}(t)$ is the $\ell$'th entry of $\bg_j(t)$. 

From the definition of the subgradient-push in Eq. (\ref{eq:minmet}) it can be seen that 
for $\ell=1,\ldots,d$,
\[ \widetilde{x}_\ell(t+1) = A(t) \widetilde{x}_\ell(t) - \alpha(t+1) \widetilde{g}_\ell(t+1).\] 
Since $A(t)$ is a column-stochastic matrix, it follows that for all $\ell=1,\ldots,d$,
\[ \frac{1}{n} \sum_{j=1}^n [\widetilde{x}_\ell(t+1)]_j
 =  \frac{1}{n} \sum_{j=1}^n [\widetilde{x}_\ell(t)]_j 
 - \frac{\alpha(t+1)}{n} \sum_{j=1}^n [\widetilde{g}_\ell(t+1)]_j.\]
%Note that the above equation holds for each $\ell=1, \ldots, d$. 
Since the $\ell$'th entry of $\bar \bx(t+1)$ is exactly the left-hand side above, we can conclude that
\begin{equation} \label{eq:avdone} 
\bar \bx(t+1) = \bar \bx(t) - \frac{\alpha(t+1)}{n} \sum_{j=1}^n \bg_j(t+1). 
\end{equation}

Now let $\bv\in\mathbb{R}^d$ be an arbitrary vector. 
From relation~\eqref{eq:avdone} it follows that
for all $t\ge0$,
\begin{align*}
\|\bar\bx(t+1) - \bv\|^2 = & \|\bar\bx(t) - \bv\|^2 \cr
& - \frac{2\alpha(t+1)}{n} \sum_{i=1}^n \bg'_i(t+1)(\bar\bx(t) - \bv) \cr
& +\frac{\alpha^2(t+1)}{n^2}\left\|\sum_{i=1}^n \bg_i(t+1)\right\|^2.\end{align*}
Since the subgradient norms of each $f_i$ are uniformly bounded by $L_i$, it further 
follows that for all $t\ge0$,
\begin{align}\label{eq:beg1}
\|\bar\bx(t+1)  - &\bv\|^2 
\le \|\bar\bx(t) - \bv \|^2 \cr
& - \frac{2\alpha(t+1)}{n} \sum_{i=1}^n \bg'_i(t+1)(\bar\bx(t) - \bv) \cr
& +\alpha^2(t+1)\frac{\aor{L^2}}{n^2}.\end{align}

We next consider a cross-term $\bg'_i(t+1)(\bar\bx(t) - \bv)$ in~\eqref{eq:beg1}.  
For this term, we write
\begin{align}\label{eq:beg2}
 \bg_i'(t+1)(\bar\bx(t) - \bv)
=&   \bg_i'(t+1)(\bar\bx(t) - \bz_i(t+1) \aor{)}\cr
& + \bg_i'(t+1)(\bz_i(t+1) - \bv).
\end{align}
Using the subgradient boundedness and the Cauchy-Schwarz inequality, we can lower bound the first term
$\bg_i'(t+1)(\bar\bx(t) - \bz_i(t+1))$ as 
\begin{equation}\label{eq:beg3}
\bg_i'(t+1)(\bar\bx(t) - \bz_i(t+1))
\ge -L_i \|\bar\bx(t) - \bz_i(t+1)\|.
\end{equation} 
As for the second term $\bg_i'(t+1) (\bz_i(t+1) - \bv)$, we can 
use the fact that $\bg_i'(t+1)$ is the subgradient of $f_i(\theta)$ at $\theta=\bz_i(t+1)$ 
to obtain: 
\[\bg'_i(t+1)(\bz_i(t+1) - \bv) \ge  f_i(\bz_i(t+1)) - f_i(\bv),\]
from which, by adding and subtracting \aor{$f_i\left(\bar\bx(t)\right)$} and using the 
Lipschitz continuity of $f_i$ (implied by the subgradient boundedness), we further obtain
\begin{eqnarray}\label{eq:beg4}
\bg'_i(t+1) (\bz_i(t+1)-  \bv) 
%&\ge &  f_i(\bz_i(t+1)) - f_i(\bar\bx(t)) + f_i(\bar\bx(t)) - f_i(\bv)\cr
&\ge &  - L_i\|\bz_i(t+1)-\bar\bx(t)\| \cr
&&  + f_i(\bar\bx(t)) - f_i(\bv).
\end{eqnarray}
By substituting the estimates of Eqs.~\eqref{eq:beg3}--\eqref{eq:beg4} 
back in relation~\eqref{eq:beg2}, and using $F(\bx)= \sum_{i=1}^n f_i(\bx)$
we obtain
\begin{align}\label{eq:beg5}
\sum_{i=1}^n \bg'_i(t+1) & (\bar\bx(t)  -  \bv)
\ge  F(\bar\bx(t)) - F(\bv)  \cr
& -  2\sum_{i=1}^n L_i \|\bz_i(t+1)-\bar\bx(t)\|.  
\end{align}
Now, we substitute estimate~\eqref{eq:beg5} into relation~\eqref{eq:beg1} and obtain for
any $\bv\in\mathbb{R}^d$ and all $t\ge0$,
\begin{align*}%\label{eq:beg6}
\|\bar\bx(t+1) & - \bv \|^2 
\le  \|\bar\bx(t) - \bv \|^2 \cr
& - \frac{2\alpha(t+1)}{n} \left(F(\bar\bx(t)) - F(\bv) \right) \cr
& + \frac{4\alpha(t+1)}{n} \sum_{i=1}^n L_i \|\bz_i(t+1)-\bar\bx(t)\| \cr
& +\alpha^2(t+1)\frac{\aor{L^2}}{n^2}.\end{align*}
\end{proof}

With all the pieces in place, we are finally ready to prove Theorem \ref{mainthm}. 
The proof idea is to show that the averages $\bar\bx(t)$, as defined in Lemma~\ref{lemma:key},
converge to some solution \aor{$x^*\in Z^*$} and then show that $\bz_i(t+1)-\bar\bx(t)$ converges to 0 
for all $i$, as $t\to\infty$. The last step will be accomplished by invoking Lemma \ref{lemma:newx} on the
perturbed push-sum protocol. 

\begin{proof}[Proof of Theorem \ref{mainthm}] 
We begin by observing that the subgradient-push method
may be viewed as an instance of the perturbed push-sum protocol. 
Indeed, let us adopt the notation $\widetilde x_\ell(t), \widetilde g_\ell(t)$ from 
the proof of Lemma \ref{lemma:key}, and moreover let us define 
$\widetilde w_\ell(t), \widetilde z_\ell(t)$ \aor{similarly}. Then, 
the definition of subgradient-push method implies that for all $\ell=1,\ldots,d$,
\begin{eqnarray*} 
&& \widetilde w_\ell (t+1)  =  A(t) \widetilde x_\ell(t) ,\cr
&& y(t+1)  = A(t) y(t), \cr
&& [{\widetilde z}_\ell(t+1)]_i  =  \frac{[\widetilde w_\ell(t+1)]_i}{y_i(t+1)}\qquad \aor{\hbox{ for } i = 1, \ldots, n}, \\
&& \widetilde x_\ell(t+1)  = \widetilde w_\ell(t+1) - \alpha(t+1) \widetilde g_\ell(t+1).
\end{eqnarray*} 

Next, we \aor{will} apply Lemma~\ref{lemma:newx}(b$)$ to each coordinate $\ell=1,\ldots,d$.
Since $\alpha(t) \rightarrow 0$ and the subgradients are bounded, the assumptions of 
Lemma~\ref{lemma:newx}(b) are satisfied with 
$\e(t+1)= \alpha(t+1) \widetilde g_\ell(t+1)$ for each $\ell$. 
From Lemma \ref{lemma:newx}(b) we conclude that
\begin{equation*}
\lim_{t\to\infty} \left |[\widetilde{z}_\ell(t+1)]_i -\frac{\sum_{j=1}^n [\widetilde x_{\ell}(t)]_j}{n}\right|=0
\end{equation*}
for all $\ell=1,\ldots,d$ and all $i=1,\ldots,n,$
which is equivalent to
\begin{equation}\label{eq:con0}
\lim_{t\to\infty} \|\bz_{i}(t+1) -\bar\bx(t)\|=0
\quad\hbox{for all $i=1,\ldots,n.$}\end{equation}

Now, we \aor{will} apply Lemma~\ref{lemma:newx}(c$)$ to each coordinate $\ell=1,\ldots,d$.
Let $\ell$ be arbitrary but fixed coordinate index.
Since the subgradients $\bg_i(s)$ are uniformly bounded,
using $\e(t+1)= \alpha(t+1) \widetilde g_\ell(t+1)$ 
we can see that for all $i=1,\ldots,n$ and all $t\ge1$,
\[|\e_{i}(t)|  \le  \a(t) \|\widetilde g_\ell(\aor{t})\|_{\infty} 
\leq \a(t) \max_i\| \bg_i(\aor{t})\|.\]
Therefore, since by assumption $\sum_{t=1}^\infty \a(t)^2<\infty$ and $\|\bg_i(t)\|\le L_i$, 
we obtain for all $i=1,\ldots,n$,
\[\sum_{t=1}^\infty \a(t)|\e_{i}(t)| 
%=\sum_{t=1}^{\infty} \a^2(t) \|\widetilde g_\ell(t+1)\|_{\infty} 
\leq \left(\max_i L_i\right)\sum_{t=1}^{\infty} \a^2(t)  < \infty.\]
In view of the preceding relation and the assumption that the sequence $\{\alpha(t)\}$ is non-increasing, 
by applying Lemma~\ref{lemma:newx}(b) to each coordinate $\ell=1,\ldots,d$,
we obtain
\[\sum_{t=0}^\infty\a(t+1) \left| [\widetilde z_\ell(t+1)]_i  
- \frac{\sum_{j=1}^n [\widetilde x_{\ell}(t)]_j}{n}\right|<\infty,\]
for all $\ell=1,\ldots,d$ and all $i=1,\ldots,n$,
 implying that
\begin{equation}\label{eq:sumfin}
\sum_{t=0}^\infty\a(t+1) \|\bz_{i}(t+1) - \bar\bx(t)\|<\infty
 \quad \hbox{for all $i$.}
\end{equation}

Now, \aor{observe that we can apply} Lemma~\ref{lemma:key} \aor{with}
$\bv=z^*$ for \aor{any} solution $z^*\in Z^*$ \aor{to obtain}
\begin{align}\label{eq:beg6}
\|\bar\bx(t+1) & - z^* \|^2 
\le  \|\bar\bx(t) - z^* \|^2 \cr
& - \frac{2\alpha(t+1)}{n} \left(F(\bar\bx(t)) - F^* \right) \cr
&  + \frac{4\alpha(t+1)}{n} \sum_{i=1}^n L_i \|\bz_i(t+1)-\bar\bx(t)\|\cr
& +\alpha^2(t+1)\frac{\aor{L^2}}{n^2},\end{align}
where $F^*$ is the optimal value (i.e., $F^*= F(z^*)$ for any $z^*\in Z^*$).
In view of Eq.~\eqref{eq:sumfin}, it follows that 
\[\sum_{t=0}^\infty \frac{4\alpha(t+1)}{n} \sum_{i=1}^n L_i \|\bz_i(t+1)-\bar\bx(t)\|<\infty.\]
Also, by assumption we have $\sum_{t=1}^\infty \alpha(t)=\infty$ and 
$\sum_{t=1}^\infty \alpha^2(t)<\infty$.
Thus, the conditions of Lemma~\ref{lemma:opt} are satisfied 
\aor{(note that $F(\cdot)$ is continuous since it is convex over $\mathbb{R}^d$)}, and by this lemma 
we conclude that the average sequence $\{\bar\bx(t)\}$ converges to a solution $\widehat z \in Z^*$.
By Eq.~\eqref{eq:con0} it follows that each sequence $\{\bz_i(t)\}$, $i=1,\ldots,n,$ 
converges to the same solution~$\widehat z$.
\end{proof}

Having proven Theorem \ref{mainthm}, we now turn to the proof of the convergence rate results of 
Theorem~\ref{convrate}. 
The first step will be a slight modification of the result we have just proved - whereas the proof of 
Theorem~\ref{mainthm} relies on $F(\bar \bx(t)) \rightarrow F^*$, \aor{in order to obtain a convergence 
rate} we will now need to argue that we can replace $F(\bar \bx(t))$ by $F$ evaluated
at a running average of the vectors $\bz_i(t)$ for any $i$. This is stated precisely in the \aor{following} lemma. 

\begin{lemma} \label{lemma:runavgbound} 
If all the assumptions of Theorem \ref{mainthm} are satisfied and  $\alpha(t) = 1/\sqrt{t}$, then  
for all $t\ge1$ and any $z^*\in Z^*$,
\begin{eqnarray*}  
&& F \left( \frac{\sum_{k=0}^t \alpha(k+1) \bar\bx(k) }
{\sum_{k=0}^t \alpha(k+1)} \right) - F(z^*) \cr
&&\hbox{}\cr
&&\leq  \frac{n}{2}\frac{\|\aor{\bar\bx(0)} - \bz^*\|_1}{\sqrt{t+1} } 
+ \frac{\aor{L^2} \left( 1 + \ln (t+1) \right)}{2 n\sqrt{t+1} } \cr
&&\hbox{}\cr
 & & + \frac{ 16 \aor{L} \sum_{j=1}^n \|\bx_j(0)\|_1 }
 {   \delta (1-\lambda) \sqrt{t+1} }\cr 
 &&\hbox{}\cr
 &&+ \frac{16 \aor{d \aor{L^2}} \left( 1+ \ln t \right) }{\delta (1-\lambda) \sqrt{t+1} }. 
 \end{eqnarray*}  
\end{lemma}

\begin{proof} 
From Lemma~\ref{lemma:key} 
we have for any $\bv$ and $t\ge0$,
\begin{eqnarray*}
&&\sum_{k=0}^t \frac{2\alpha(k+1)}{n} \left(F(\bar\bx(k)) - F(\bv) \right)
\le  \|\bar \bx(0) - \bv\|^2  \cr
&&+ \sum_{k=0}^t \frac{4\alpha(k+1)}{n} \sum_{i=1}^n L_i \|\bz_i(k+1)-\bar \bx(k)\|\cr 
&& +\sum_{k=0}^t \alpha^2(k+1)\frac{\aor{L^2}}{n^2}.\end{eqnarray*}  
From the preceding relation, \ao{recalling the notation $S(t)=\sum_{k=0}^{t-1} \alpha(k+1)$} and dividing by $(2/n)S(t+1)$,
we obtain for any $\bv \in \R^d$ and $t\ge0$, 
 \begin{eqnarray*}
&&\frac{\sum_{k=0}^t\alpha(k+1) F(\bar\bx(k))}{S(t+1)} - F(\bv)
\le  \frac{n}{2}\frac{\|\bar \bx(0) - \bv\|^2 }{S(t+1)}\cr
 &&+ \frac{2}{S(t+1)}\sum_{k=0}^t\alpha(k+1)
\sum_{i=1}^n L_i \|\bz_i(k+1)-\bar \bx(k)\|\cr
 &&+\frac{1}{S(t+1)}\sum_{k=0}^t \alpha^2(k+1)\frac{\aor{L^2}}{2n}.\end{eqnarray*}  
 Now setting $\bv = \bz^*$ for some $\bz^* \in Z^*$ and using the convexity of $F$, we obtain 
 \begin{eqnarray} \label{eq:i1} 
&& F \left( \frac{\sum_{k=0}^t \alpha(k+1) \bar\bx(k) }{S(t+1)} \right) - F(\bz^*) \cr
 %\leq \frac{\sum_{k=0}^t \alpha(k+1) F (\bx(k))}{S(t+1)} - F(\bz^*) \cr 
  &&\leq  \frac{n}{2}\frac{\|\bar \bx(0) - \bz^*\|^2 }{S(t+1)} \cr
  && + \frac{2}{S(t+1)}\sum_{k=0}^t\alpha(k+1)
\sum_{i=1}^n L_i \|\bz_i(k+1)-\bar \bx(k)\|\cr
 &&+\frac{1}{S(t+1)}\sum_{k=0}^t \alpha^2(k+1)\frac{\aor{L^2}}{2n}.
 \end{eqnarray}  
 We now bound the \aor{quantities} $\|\bz_i(\aor{k}+1)-\bar \bx(\aor{k})\|$ in Eq.~\eqref{eq:i1} 
 by applying Corollary~\ref{cor:sqrtavg} to each of its components. Specifically, we will
 apply Eq.~(\ref{eq:zxdiff}) of the corollary. Observe that all the assumption of that corollary have been assumed to hold. Moreover, the constant $D$ in the statement of the corollary \aor{can be taken to be $\sum_{i=1}^n L_i$} when 
 the corollary is applied to the $\ell$'th component of the vector
 $\bz_i(t+1)-\bar \bx(t)$. Adopting our notation of 
 $\widetilde{x}_\ell(t)$ and $\widetilde{z}_\ell(t)$ as the vectors consisting of 
 the $\ell$'th components of the vectors $\bx_i(t)$ and $\bz_i(t)$, $i=1,\ldots,n$,
 respectively, we have for all $\ell=1, \ldots, d$, all $i=1,\ldots,n$, and $t\ge1$,
 %t\ge 1 for the \ln(t) 
 \begin{align*}
 &\sum_{k=0}^t \frac{1}{\sqrt{k+1}} 
 \left| [\widetilde z_\ell(k+1)]_i - \frac{1}{n} \sum_{j=1}^n [\widetilde x_\ell(k)]_j\right| \cr
 & \leq \frac{8}{\delta(1-\lambda)} \left(\|\widetilde x_\ell(0)\|_1 + \aor{L} (1 + \ln t) \right)\end{align*}
  (cf.\ Eq.~(\ref{eq:zxdiff}) of Corollary~\ref{cor:sqrtavg}),
 which after some elementary algebra implies that 
 \begin{align} \label{eq:zxdiffbd} 
 &\sum_{k=0}^t \alpha(k+1) 
 \sum_{i=1}^n L_i \|\bz_i(k+1)-\bar \bx(k)\|_1 \cr
 & \leq  \frac{8} {\delta(1-\lambda)} \aor{L} \sum_{j=1}^n \|\bx_j(0)\|_1 \cr
 & +\frac{8 \aor{d}} {\delta(1-\lambda)} \aor{L^2} \left( 1+ \ln t \right).
% & \leq  \frac{8\lambda \left( \sum_{i=1}^n L_i \right)\sum_{j=1}^n \|\bx_j(0)\|_1 }
 %{\delta(1-\lambda)}\cr
%& +  \frac{8\left( \sum_{i=1}^n L_i^2 \right) \left( 1+ \ln t \right)}{\delta(1-\lambda)} .
 \end{align} 
 Now, using the fact that the Euclidean norm of a vector is not larger that its $1$-norm, we substitute 
 Eq.~(\ref{eq:zxdiffbd}) into Eq.~(\ref{eq:i1}). Then, using the definition of $S(t)$ and Eq.~(\ref{eq:sqrtsum})
 we bound the denominator in Eq.~(\ref{eq:i1}) as follows
 \[ S(t+1)=\sum_{k=0}^t \alpha(k+1) \geq \sqrt{t+1} .\] The preceding relation and 
\[\sum_{k=0}^t\alpha^2(k+1)=\sum_{s=1}^{t+1}\frac{1}{s}\le 1 + \int_1^{t+1}\frac{dx}{x}= 1+\ln (t+1),\]
yield the stated estimate.
\end{proof}

We are now finally in position to prove Theorem~\ref{convrate}. 
At this point, the proof is a simple combination of Lemma~\ref{lemma:runavgbound}, which tells us that 
$F \left( \frac{\sum_{k=0}^t \alpha(k+1) \bar \bx(k)}{\sum_{k=0}^t \alpha(k+1)} \right)$ 
approaches $F(z^*)$, along with Corollary~\ref{cor:sqrtavg}, which tells us that 
 $\frac{\sum_{k=0}^t \alpha(k+1) \bar \bx(k)}{\sum_{k=0}^t \alpha(k+1)}$ and 
 $\frac{\sum_{k=0}^t \alpha(k+1)  \bz_i(k+1)}{\sum_{k=0}^t \alpha(k+1)}$ get close to 
 each other over time for all $i=1, \ldots, n$. 

\begin{proof}[Proof of Theorem \ref{convrate}] It is easy to see by induction that 
the vectors $\widetilde \bz_i(t)$ defined in the statement of 
Theorem~\ref{convrate} are weighted time-averages of $\bz_i(t)$, given by: for all $i$,
\[ \widetilde \bz_i(t+1) = \frac{\sum_{k=0}^t \alpha(k+1) \bz_i(k+1)}{\sum_{k=0}^t \alpha(k+1)}
\quad\hbox{for all $t\ge0$}. \] 
By the boundedness of subgradients we obtain for all $i$ and $t\ge0$,
\begin{align*} 
&F( \widetilde \bz_i(t+1) ) - F \left( \frac{\sum_{k=0}^t \alpha(k+1) \bar\bx(k) }
{\sum_{k=0}^t \alpha(k+1)} \right)\cr
&\leq \frac{\aor{L} }
{\sum_{k=0}^t \alpha(k+1)}\sum_{k=0}^t\a(k+1)\|\bz_i(k+1)-\bar \bx(k)\|.
\end{align*}
Applying Corollary~\ref{cor:sqrtavg} to each of the coordinates of the vectors $\bz_i(k+1)$ and 
$\bar \bx(t)$, we obtain for all $i$ and $t\ge1$,
\begin{align*}  
& F( \widetilde \bz_i(t+1) ) - F \left( \frac{\sum_{k=0}^t \alpha(k+1) \bar\bx(k) }
{\sum_{k=0}^t \alpha(k+1)} \right) \cr
&\leq \frac{8 \aor{L}} {\delta(1-\lambda) \sqrt{t+1} } 
\left( \sum_{j=1}^n \|\bx_j(0)\|_1 
+ \aor{d} \aor{L} (1 + \ln t) \right). 
\end{align*}
By summing the preceding relation and that of Lemma \ref{lemma:runavgbound},  we have
%\begin{eqnarray*}  Lemma 9 result
%&& F \left( \frac{\sum_{k=0}^t \alpha(k+1) \bar\bx(k) }{\sum_{k=0}^t \alpha(k+1)} \right) - F(z^*) \cr
%&&\hbox{}\cr
%&&\leq  \frac{n}{4}\frac{\|\bx(0) - \bz^*\|_1}{(\sqrt{t+2}-1)} 
%+ \frac{(\sum_{i=1}^n L_i)^2 \left( 1 + \ln (t+1) \right)}{4 n(\sqrt{t+2}-1)} \cr
%&&\hbox{}\cr
% & & + \frac{ 8 \left( \sum_{i=1}^n L_i \right) \sum_{j=1}^n \|\bx_j(0)\|_1 }
% {   \delta (1-\lambda) (\sqrt{t+2} - 1 )}\cr 
% &&\hbox{}\cr
% &&+ \frac{8\left( \sum_{i=1}^n L_i\right)^2  \left( 1+ \ln t \right) }{\delta (1-\lambda) (\sqrt{t+2} - 1 )}. 
% \end{eqnarray*}
\begin{eqnarray*}  
&& F \left( \widetilde \bz_i(t+1) \right) - F(z^*) 
\leq  \frac{n}{2}\frac{\|\aor{\bar\bx(0)} - \bz^*\|_1}{\sqrt{t+1}} \cr
&&+ \frac{\aor{L^2} \left( 1 + \ln (t+1) \right)}{2 n\sqrt{t+1} } \cr
&&\hbox{}\cr
 & & + \frac{ 24 \aor{L} \sum_{j=1}^n \|\bx_j(0)\|_1 }
 {   \delta (1-\lambda) \sqrt{t+1} }\cr 
 &&\hbox{}\cr
 &&+ \frac{24 \aor{d} \aor{L^2} \left( 1+ \ln t \right) }{\delta (1-\lambda) \sqrt{t+1} }, 
 \end{eqnarray*}
thus proving  Theorem~\ref{convrate}.
\end{proof} 

\section{Simulations\label{sec:simul}} 
We report some simulations of the subgradient-push method which 
experimentally demonstrate that its performance is often quite scalable.  
We will be optimizing the scalar function $F(\theta) = \sum_{i=1}^n p_i (\theta - u_i)^2$ 
where $u_i$ is a variable that is known only to node $i$. 
This is a canonical problem in distributed estimation: 
the nodes are attempting to measure a parameter $\widehat \theta$, and 
node $i$ measures $u_i = \widehat \theta + w_i $ where $w_i$ are jointly Gaussian and zero mean. 
Letting $p_i$ be the inverse of the variance
of $w_i$, the maximum likelihood estimate is the minimizer $\theta^*$ of $F(\theta)$ 
($\theta^*$ is unique provided that $p_i>0$ for at least one $i$). 
We set a half of the $p_i$'s to zero 
(corresponding to nodes not taking any measurements) and 
the other half of the $p_i$'s to uniformly random values between $0$ and $1$. 
%In all the experiments,
The initial points $x_i(0)$ are independent random variables, each with a standard 
Gaussian distribution. 
Figure~1 shows the results for simple random graphs where every node has two out-neighbors, 
one belonging to a fixed cycle and the other one chosen uniformly at random at each step. 
The plot to the left shows how \aor{$\|z(t) - \theta^*\1\|$} decays on a graph instance with $n=1000$ nodes, while one to the right shows the (average) time until \aor{$\|z(t) - \theta^*\1\| \leq 0.1$} 
as the size $n$ of the graph varies from 10 to 90. 
Figure 2 illustrates the same quantities for the sequence of graphs which alternate between 
two (undirected) star graphs. 
\begin{figure}[t!]
\centering
%\begin{array}{c}
\includegraphics[scale=0.4]{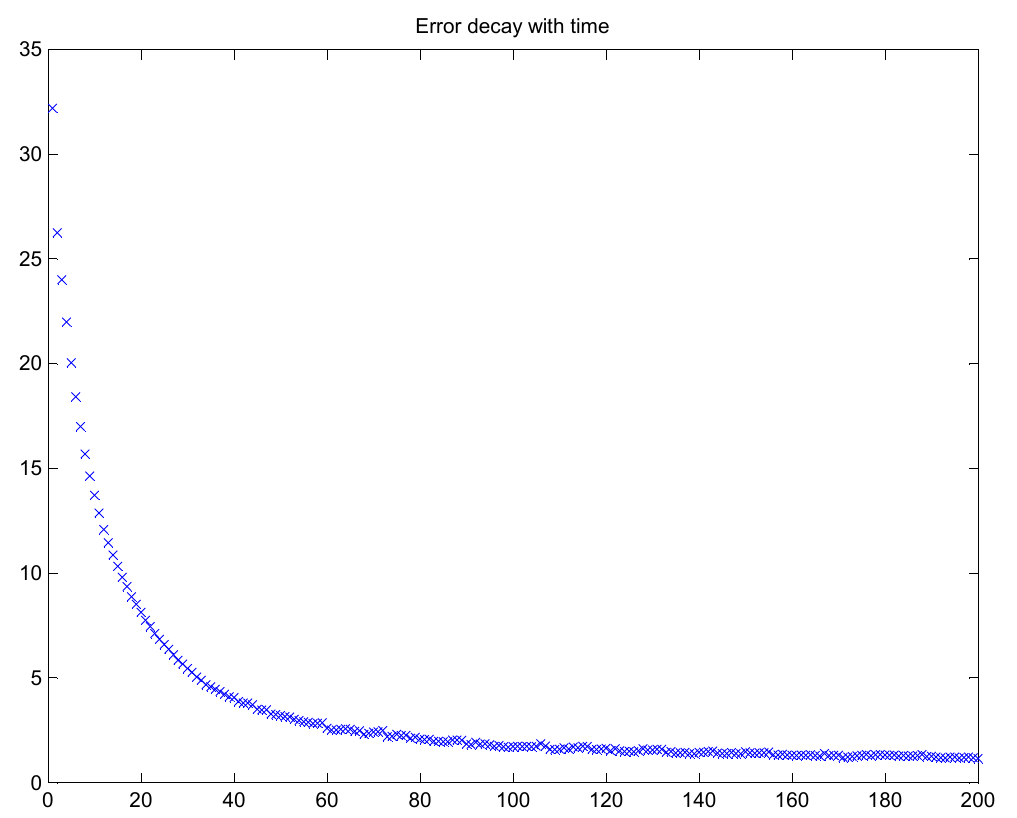}
\includegraphics[scale=0.4]{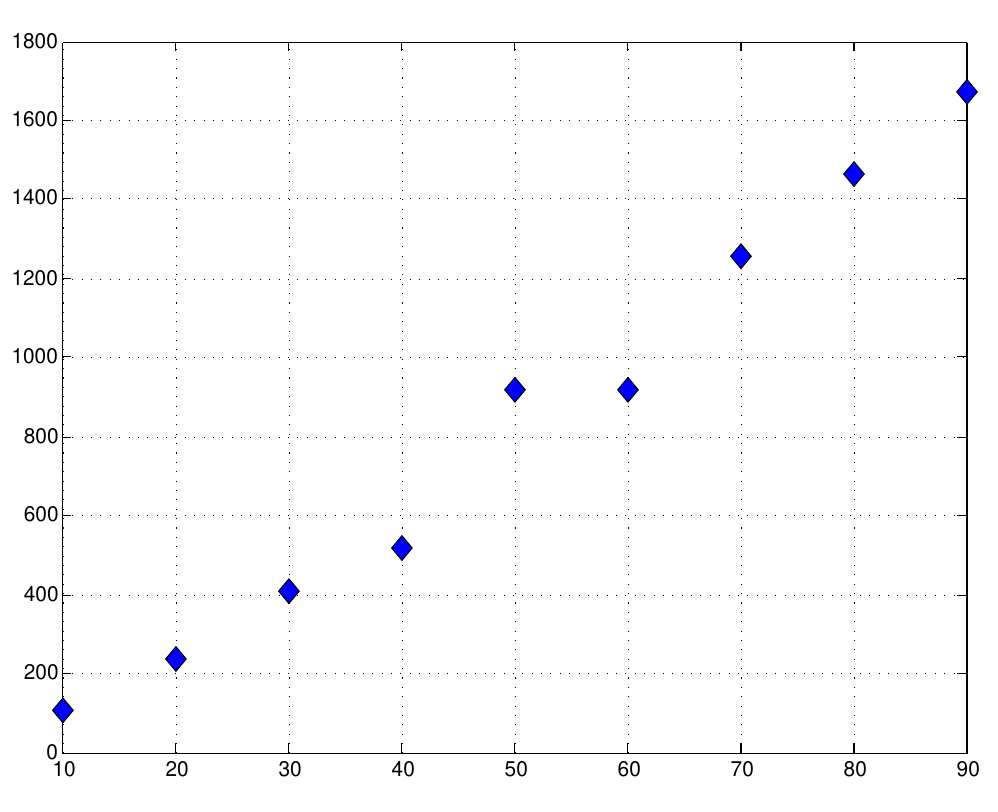}
%\end{array}$
\caption{Top plot: the number of iterations ($x$-axis) and $\|z(t) - \aor{\theta^*}\1\|$ ($y$-axis), where $z(t)$ is the vector stacking up all $z_i(t)$,
for an instance of a graph with 1000 nodes. 
Bottom plot: the number of nodes ($x$-axis) and 
the average time until  $\|z(t) - \aor{\theta^*}\1\| \leq 0.1$ over 30 simulations ($y$-axis).}
\label{f1}
\end{figure}

\begin{figure}[h!]
\centering
{\includegraphics[scale=0.4]{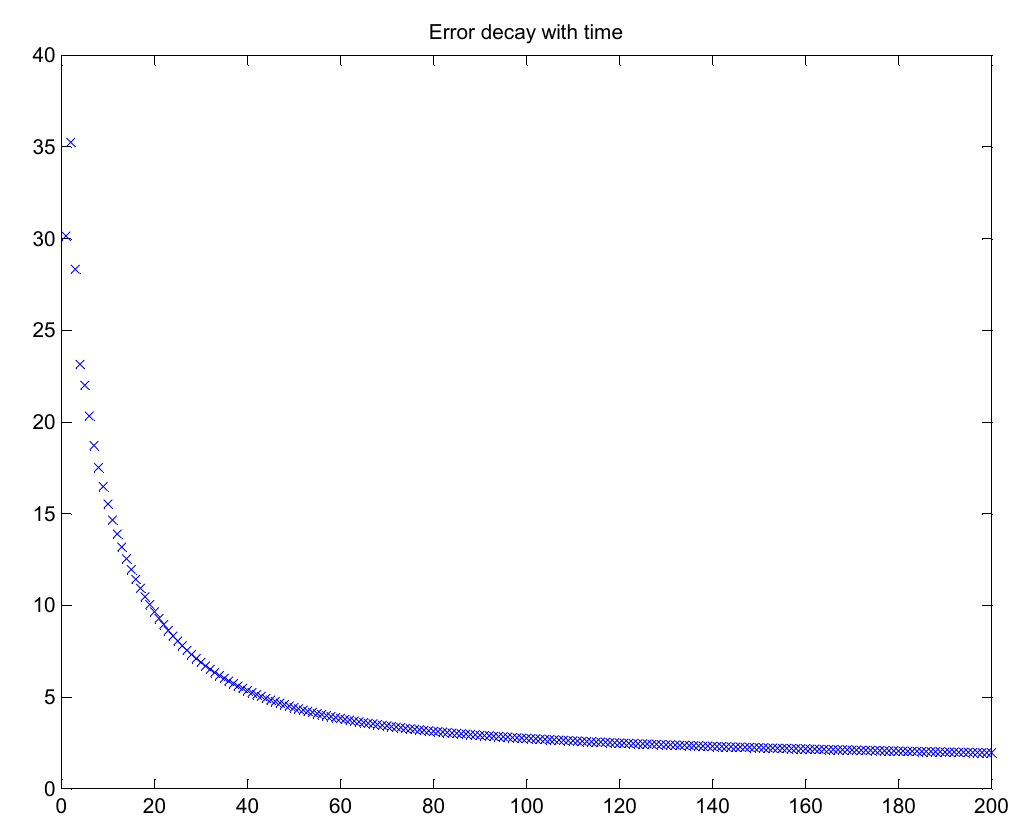}}
{\includegraphics[scale=0.4]{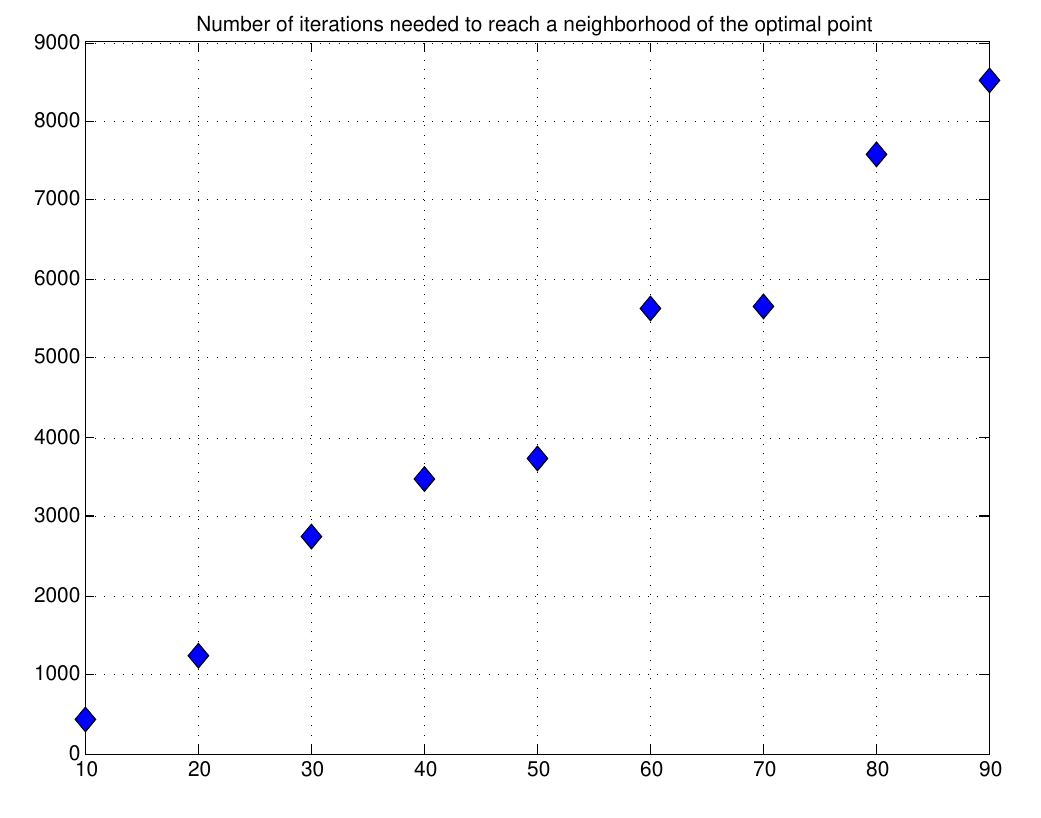}}
\caption{Top plot: the number of iterations ($x$-axis) and $\|z(t) - \aor{\theta^*}\1\|$ ($y$-axis), where $z(t)$ is the vector stacking up all $z_i(t)$,
for an instance of a graph with $n=1000$ nodes. 
Bottom plot: the number of nodes ($x$-axis) and the average time until 
$\|z(t) - \theta^*\1\|\leq 0.1$ over 50 simulations ($y$-axis).}
% Both graphs are for the
%function  $F(\theta) = \sum_{i=1}^n p_i (\theta - u_i)^2$ with half the $p_i$ set to zero, and half set to a uniform random number in $[0,1]$. 
%The communication graphs alternate between two fixed undirected star graphs. Initial conditions are random standard Gaussians at each stage.
% Each point on the right graph is an average of 50 simulations.}
\end{figure}

We see that in both cases the initial decay
is quite fast and it takes only a small number of iterations to bring all the nodes reasonably close to the optimal value. Nevertheless, the decay is
clearly sub-geometric, consistently with the bounds we have proven.  However,
in both cases, the time to get the error below the threshold of $0.1$ starting from random initial points 
appears to scale linearly in the number of nodes. 

\section{Conclusions\label{sec:concl}} 
We have introduced the subgradient-push, a broadcast-based distributed protocol for distributed optimization of a sum of convex 
functions over directed graphs. We have shown that, as long as the communication graph
sequence $\{G(t)\}$ is uniformly strongly connected, the subgradient-push succeeds in driving all the nodes to
the same point in the set of optimal solutions. Moreover, the objective function converges at a rate of 
$O( \ln t/\sqrt{t})$, where the constant depends on the initial vectors,
bounds on subgradient norms, consensus speed $\lambda$ of the graph sequence $\{G(t)\}$, as well as a measure of the imbalance of influence $\delta$ among the nodes. 

Our results motivate the open problems associated with understanding how the consensus 
speed $\lambda$ depends on properties of the sequence $\{G(t)\}$. 
Similarly, it is also natural to ask how the measure of imbalance of influence $\delta$ depends on the combinatorial properties of the graphs $G(t)$, 
namely how it depends on the diameters, the size of the smallest cuts, and other pertinent features of the graphs in the sequence 
$\{G(t)\}$.

\bibliographystyle{plain}
\bibliography{directed-opt}

\end{document}